\theoremstyle{plain}
\newtheorem{theorem}{Theorem}[section]
\newtheorem{corollary}[theorem]{Corollary}
\newtheorem{lemma}[theorem]{Lemma}
\theoremstyle{definition}
\newtheorem{definition}[theorem]{Definition}
\newtheorem{remark}[theorem]{Remark}
\numberwithin{equation}{section}
\newtheorem*{theorem*}{Theorem}
\def\Xint#1{\mathchoice
{\XXint\displaystyle\textstyle{#1}}
{\XXint\textstyle\scriptstyle{#1}}
{\XXint\scriptstyle\scriptscriptstyle{#1}}
{\XXint\scriptscriptstyle\scriptscriptstyle{#1}}
\!\int}
\def\XXint#1#2#3{{\setbox0=\hbox{$#1{#2#3}{\int}$}
\vcenter{\hbox{$#2#3$}}\kern-.5\wd0}}
\def\dashint{\Xint-}
\newcommand{\dmu}{\, \mathrm{d}\mu}
\newcommand{\dx}{\, \mathrm{d}x}
\newcommand{\dt}{\, \mathrm{d}t}
\newcommand{\dla}{\, \mathrm{d} \lambda}
\DeclareMathOperator{\dive}{div}
\DeclareMathOperator{\BMO}{BMO}
\DeclareMathOperator{\PBMO}{PBMO}
\providecommand{\abs}[1]{ \lvert#1  \rvert}
\providecommand{\norm}[1]{ \lVert#1  \rVert}
\providecommand{\prt} {{ \rm pr }_{t} \, } 
\begin{document}

\title[Parabolic $\BMO$]{Parabolic $\BMO$ and global integrability of supersolutions to doubly nonlinear parabolic equations}

\author{Olli Saari} 

\address{(O.S.) Aalto University, Department of
  Mathematics and Systems Analysis, P.O. Box 11100, FI-00076 Aalto,
  Finland}\email{olli.saari@aalto.fi}
  
\subjclass[2010]{35K92, 42B37}

\keywords{Parabolic $\BMO$, John-Nirenberg lemma, Hölder domain, quasihyperbolic boundary condition, doubly nonlinear equation, global integrability}

\begin{abstract} We prove that local and global parabolic $\BMO$ spaces are equal thus extending the classical result of Reimann and Rychener. Moreover, we show that functions in parabolic $\BMO$ are exponentially integrable in a general class of space-time cylinders. As a corollary, we establish global integrability for positive supersolutions to a wide class of doubly nonlinear parabolic equations.
\end{abstract}
  
\maketitle

\section{Introduction}
In 1993, Lindqvist \cite{Lindqvist1993} proved that positive ${ \bf A}_p$-superharmonic functions bounded away from zero are globally integrable to some small power $\epsilon > 0$ on H\"older domains, that is, on domains satisfying a quasihyperbolic boundary condition (defined in \cite{GM1985}, see also Definition \ref{def: quasihyperbolic}). This was qualitatively the most general result in a series of papers investigating global integrability of harmonic functions, starting from Armitage's result on balls \cite{Armitage1971}, and leading via Lipschitz domains \cite{MS1989} and generalizations \cite{Masumoto1992} to H\"older domains of Stegenga and Ullrich \cite{SU1995}. H\"older domains were characterized as the ones with exponentially integrable quasihyperbolic metric in \cite{SS1991}, and this was proved to imply exponential integrability of functions $\BMO$ in \cite{SS1991} and \cite{Hurri1993}. Noting the well-known fact visible in Moser's proof \cite{Moser1961} for Harnack inequality of elliptic partial differential equations, i.e.~ that $- \log u \in \BMO$ for positive supersolutions, Lindqvist composed the general result. For more about research related to global integrability, see also \cite{Suzuki1993}, \cite{Gotoh1999} and \cite{Aikawa2000}.

Contrary to the elliptic theory, in the parabolic case very little or nothing has been done, even in the case of heat equation, at least to the author's knowledge. In this paper, we prove many parabolic analogues of the above mentioned resuls. We will use a version of parabolic $\BMO$, originally introduced by Moser \cite{Moser1964}. Then, using the John-Nirenberg type inequalities known from \cite{Aimar1988}, we will establish a global version of John-Nirenberg type lemma in space-time cylinders that are H\"older domains in spatial dimensions (corresponding to the non-parabolic results from \cite{Hurri1993} and \cite{SS1991}). Once we have global John-Nirenberg inequality, we can prove the equivalence of local and global norms generalizing the classical result of Reimann and Rychener \cite{RR1975}.

As an application, we will consider equations of the form
\begin{equation}
\label{eq: equationintro}
\frac{\partial (u^{p-1})}{\partial t} = \dive  A(x,t,u,Du) ,
\end{equation}
where the function $A$ satisfies certain $p$-Laplace type growth conditions to be specified in Section 6. This class of equations has been studied for instance in \cite{Trudinger1968} and \cite{KK2007}. The key fact we will use is of course the parabolic $\BMO$-condition, and our results will apply to any equation whose solutions are exponentials of functions in parabolic $\BMO$ according to Definition \ref{def: bmo condition} or even Definition \ref{def: lag mappings}.

We conclude the introduction by briefly describing how the parabolic case differs from the elliptic one. The mean oscillation of a function $u \in L_{loc}(\mathbb{R}^{n}) $ is defined as
\[\frac{1}{\abs{B}} \int_{B} \abs{u-u_{B}} \dx,\]
with $B$ a ball, and a function is in $\BMO$ if its mean oscillation is 
uniformly bounded. In the definition of parabolic $\BMO$, there is a \textit{time lag} between the domains where we measure the upper and lower deviation from some constant. Thus the picture to have in mind about parabolic oscillation of $u \in L^{1}_{loc}(\mathbb{R}^{n+1})$ is
\begin{multline*}
\frac{1}{\abs{B \times (\theta + I)}} \int_{\theta + I} \int_{B}(u(x,t)-a_{B \times I})^{+} \dx \dt \\ +   \frac{1}{\abs{B \times( I)}} \int_{ I} \int_{B} (u(x,t)-a_{B \times I})^{-} \dx \dt,
\end{multline*} 
where $a_{B \times I}$ is a constant, $I$ an interval so that $B \times I$ respects the appropriate geometry, $\theta + I$ is the interval translated forward in time by the lag parameter $\theta > {\rm length} (I)$, and $\abs{ \cdot}$ denotes the $n+1$ dimensional Lebesgue measure. The appearance of time lag is a deep fact originating from the time lag phenomenon of parabolic partial differential equations, and it results in that parabolic $\BMO$ differs quite a lot from the classical $\BMO$.

The paper is organized as follows: Section 2 introduces notation and known results, Section 3 develops a chaining technique, Section 4 contains the proof of global John-Nirenberg inequality, Section 5 consists of the formulations of its most important consequences, and in Section 6 we apply the results to parabolic differential equations. 

\vspace{0.5cm}

\textit{Acknowledgement.} The author would like to thank Juha Kinnunen for proposing the problem and for valuable discussions on the subject.

\section{Definitions and preliminaries}
We will start with general conventions. We will work in $\mathbb{R}^{n+1}$. The first $n$-coordinates will be called \textit{spatial} (usually denoted $x$) and the last one \textit{temporal} (usually denoted $t$). We will use the standard notation $\abs{E}$ for the Lebesgue measure of $E$. In most cases we do not specify its dimension, but it must be clear from the context. When it comes to integrating, we denote $\dmu = \dx \dt$. The letter $C$ without subscript will be a constant depending only on the quantities we are not keeping track of, and we denote $f \lesssim 1$ if $f \leq C$. Occasional subscripts in this notation, such as $\lesssim_n$, will emphasize the dependencies of the constant. The positive part of a function $u$ is denoted $(u)^{+}=(u)_{+} = \chi_{\{u > 0\}}u$; the negative part is defined by $(u)^{-}= (u)_{-} = -\chi_{\{u < 0\}}u$.

\begin{figure}
\centering
\includegraphics[scale=0.7]{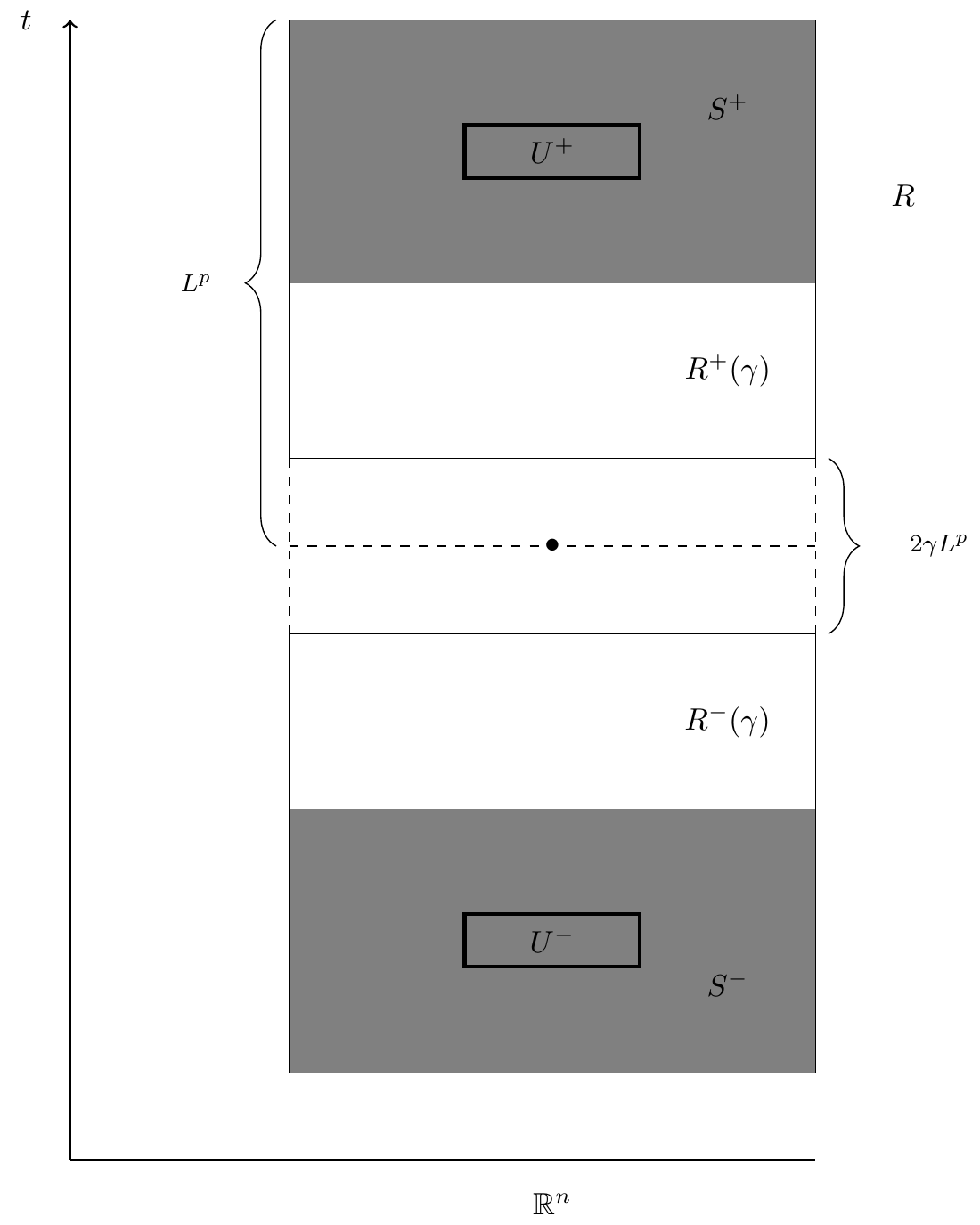}
\begin{caption}
{\label{fig:rect}} A parabolic rectangle $R$ and the arrangement of its subsets $U^{+} \subset S^{+} \subset R^{+}(\gamma)$. The lengths are not in correct scales.
\end{caption}
\end{figure}

The definition of classical $\BMO$ is stated in terms of Euclidean cubes. In the parabolic context, the class of cubes must be replaced by that of \textit{parabolic rectangles}. The notation introduced in the next definition is illustrated in Figure \ref{fig:rect}. 
\begin{definition}[Parabolic rectangle]
\label{def: parabolic rectangle}
Let $Q(x,L) \subset \mathbb{R}^{n}$ be a cube with sidelength $L$ and center $x$, and let $p> 1$ be fixed. We define a parabolic rectangle centered at $(x,t)$ with sidelength $L$, its upper half and its upper quarter as 
\begin{align*}
R &= Q \times (t-L^{p}, t + L^{p}) \\
R^{+} &= Q \times (t , t + L^{p}) \\
S^{+} &= \{(y,\tau) \in R: \tau > t+ \textstyle{\frac{1}{2}}L^{p} \} .
\end{align*}
The corresponding lower parts $R^{-}$ and $S^{-}$ are defined analogously. The \textit{parabolic scaling} of a rectangle and its quarter are defined as
\begin{align*}
\lambda R &= (\lambda Q) \times (t-(\lambda L)^{p}, t + (\lambda L)^{p})\\
\lambda S^{+} &= (\lambda Q) \times (t + \textstyle{\frac{3}{4}} L^{p} - \textstyle{\frac{1}{4}} (\lambda L)^{p}, t + \textstyle{\frac{3}{4}} L^{p} + \textstyle{\frac{1}{4}}(\lambda L)^{p})
\end{align*}
A special but technical role is played by the sets 
\[U^{+ } =   \frac{1}{8} S^{+}  ,\]
called \textit{upper fragments}. From now on, the symbols $Q$, $R^{\pm}$, $S^{\pm}$ and $U^{\pm}$ will be reserved for sets introduced in this definition.
\end{definition}
It may be useful to notice that $S^{+}$ is a metric ball with respect to 
\begin{equation}
\label{eq:parametric}
d((x,t),(y,\tau)):= \max\{\norm{x-y}_{\infty}, C_p \abs{t-\tau}^{1/p} \}.
\end{equation}
This motivates the way we define its scaling. Moreover, the centers of various $n+1$ dimensional sets mean centers of the corresponding metric balls with respect to \eqref{eq:parametric}.

The parabolic $\BMO$ spaces arise from partial differential equations, and the scaling properties of the equation determine the number $p$ specifying the shape of parabolic rectangles. Since parabolic $\BMO$ condition is stated in terms of parabolic rectangles, different values of $p$ lead to different function spaces. In what follows, $p>1$ is considered to be fixed. The following definition is different from the one given in \cite{FG1985}, but it seems to be more suitable when investigating local-to-global phenomena. 

\begin{definition}[Parabolic $\BMO$]
Let $\Omega \subset \mathbb{R}^{n}$ be a domain and $T \in (0,\infty]$. Given $\sigma \geq 1$, a function $u \in L^{1}_{loc}(\Omega \times (0,T))$ is in parabolic $\BMO$, denoted $\PBMO^{\sigma}(\Omega \times (0,T))$ if for each parabolic rectangle $R$ there is a constant $a_R$ such that
\begin{equation}
\label{def: bmo condition}
\sup_{\sigma R \subset \Omega \times (0,T)} \left( \dashint_{  S^{+}} (u-a_R)^{+} \dmu +  \dashint_{  S^{-}} (a_R- u)^{+} \dmu \right) =:\norm{u}_{\PBMO^{\sigma}}  < \infty.
\end{equation}
In case $\sigma = 1$, it will be omitted in the notation.
\end{definition}

The starting point for our considerations is the John-Nirenberg inequality satisfied by $u \in \PBMO$. In sufficient generality it was proved by Aimar \cite{Aimar1988}, who studied $\BMO$-spaces with lag maps. His aim was to develop a unified approach to both classical and parabolic $\BMO$ on spaces of homogeneous type. The next technical definition is a special case of Definition 1.4 in \cite{Aimar1988}, and it is included in order to demonstrate that Aimar's results apply to all $\BMO$-type spaces discussed in this paper.

\begin{definition}[$\BMO$ spaces with certain lag mappings]
\label{def: lag mappings}
Let $r \in \mathbb{R}$ and $b \in (0,1]$. 
For an $L$-sided parabolic rectangle $R$ centered at $(y,\tau)$, define 
\begin{align*}
T_{R}(x,t;L) &= (x,t-L^{p};L ) \\
T_{S}(x,t;L) &= \left(x,t-\frac{3}{2} L^{p};L\right) \\
h(r) &= (r)_{+}^{b}.
\end{align*}
A function $u \in L_{loc}^{1}( R^{*} )$ is said to belong to $\BMO$ space with lag mapping $T_B $ ($B=S$ or $B=R$) with respect to $h $ if for each parabolic subrectangle $R \subset R^{*}$, there is a constant $a_R$ such that 
\[\sup_{R \subset R^{*}} \left( \dashint_{  B^{+}} h(u-a_R) \dmu +  \dashint_{  B^{-}} h(a_R- u)  \dmu \right) < \infty. \]
Note that the lag maps $T_B$ take a center and a radius of a ball with respect to a metric \eqref{eq:parametric}, and map them to a new center and a new radius. It is easy to see that with these choices of $T_R$ and $T_S$, $B^{-}$ is the metric ball corresponding to the data from $T_B$ and $B^{+}$.
\end{definition}

The choice $b= 1$ and $B = S$ gives the condition \eqref{def: bmo condition}. Any space of Definition \ref{def: lag mappings} satisfies the following John-Nirenberg inequality (Lemma 3.4 in \cite{Aimar1988}) with certain modifications, but we will state it for the case of $\PBMO^{\sigma}$. Aimar's approach  allows us to use general $p> 1$ instead of $p=2$ of \cite{FG1985}.
\begin{lemma}[Aimar \cite{Aimar1988}]
Let $u \in \PBMO^{\sigma}(\Omega \times (0,T)) $. Then there are constants $A$ and $B$ depending only on $\norm{u}_{\PBMO^{\sigma}}$, $p$ and $n$ such that for each parabolic rectangle $R$ with $\sigma R \subset \Omega \times (0,T)$ the following holds:
\begin{align}
\label{JN+}
 \abs{  U^{+} \cap \{ (u-a_R)^{+} > \lambda\}} &\leq A e^{-B\lambda} \abs{U^{+}} \quad \textrm{and} \\
 \label{JN-}
 \abs{ U^{-}\cap \{ (a_R-u)^{+} > \lambda\}} &\leq A e^{-B\lambda} \abs{U^{-}} .
\end{align}
\end{lemma}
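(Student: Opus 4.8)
The quickest route is to observe that this is Aimar's John--Nirenberg lemma (Lemma 3.4 in \cite{Aimar1988}) applied to $\PBMO^{\sigma}$: one checks that $\PBMO^{\sigma}(\Omega\times(0,T))$ is a $\BMO$ space with a lag mapping in the sense of Definition \ref{def: lag mappings} (take $b=1$ and $B=S$), that restricting the defining supremum to rectangles with $\sigma R\subset\Omega\times(0,T)$ is exactly the kind of localized formulation to which Aimar's argument applies, and that $U^{\pm}=\tfrac18 S^{\pm}$ are dilates of the metric balls $S^{\pm}$ for \eqref{eq:parametric} by a factor depending only on $n$ and $p$, so that the estimates Aimar obtains on the balls transfer to $U^{\pm}$ at the cost of changing $A$ and $B$. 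It is worth recording, however, how the proof itself runs, since it is the template for the rest of the paper.

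After removing parabolic dilations and time translations, which are symmetries both of the $\PBMO^{\sigma}$ norm and of \eqref{JN+}--\eqref{JN-}, the plan is a self-improving exponential estimate. Put $K=\norm{u}_{\PBMO^{\sigma}}$, fix a small $c>0$, and set
\[
F(R)=\dashint_{U^{+}}e^{c(u-a_R)^{+}}\dmu,\qquad G=\sup\bigl\{F(R):\sigma R\subset\Omega\times(0,T)\bigr\}\in(0,\infty],
\]
so that it suffices to show $G<\infty$. Fix an admissible $R$. Since $U^{+}\subset S^{+}$ with $\abs{S^{+}}/\abs{U^{+}}=8^{n+p}$, the defining condition gives $\dashint_{U^{+}}(u-a_R)^{+}\dmu\le 8^{n+p}K$. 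I would then perform a Calderón--Zygmund decomposition of $U^{+}(R)$ at a level $\Lambda=C_1 K$ with respect to parabolic balls (the metric \eqref{eq:parametric} makes $\mathbb{R}^{n+1}$ doubling and $S^{\pm}$, hence $U^{\pm}$, balls), producing a family of balls $B_j\subset U^{+}(R)$ with $\dashint_{B_j}(u-a_R)^{+}\dmu\lesssim\Lambda$, with $\sum_j\abs{B_j}\lesssim(K/\Lambda)\abs{U^{+}}$, and with $(u-a_R)^{+}\le\Lambda$ holding $\mu$-a.e.\ on $U^{+}\setminus\bigcup_j B_j$. If $C_1$ is a large dimensional constant the balls $B_j$ are so small that each can be realized as a subset of comparable size of $U^{+}(\tilde R_j)$ for a parabolic rectangle $\tilde R_j$ that still satisfies $\sigma\tilde R_j\subset\Omega\times(0,T)$; here one uses that $B_j$ sits deep inside $S^{+}(R)\subset R\subset\sigma R$.

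On each $B_j$ one writes $(u-a_R)^{+}\le(u-a_{\tilde R_j})^{+}+(a_{\tilde R_j}-a_R)^{+}$ and, granting the key comparison $a_{\tilde R_j}-a_R\le C_2(\Lambda+K)$, estimates $\int_{B_j}e^{c(u-a_R)^{+}}\dmu\le e^{cC_2(\Lambda+K)}\int_{U^{+}(\tilde R_j)}e^{c(u-a_{\tilde R_j})^{+}}\dmu\lesssim e^{cC_2(\Lambda+K)}G\abs{B_j}$. Summing over $j$, adding the contribution $e^{c\Lambda}\abs{U^{+}}$ of $U^{+}\setminus\bigcup_j B_j$, dividing by $\abs{U^{+}}$ and taking the supremum over admissible $R$ gives
\[
G\le e^{c\Lambda}+C\,e^{cC_2(\Lambda+K)}\,\frac{K}{\Lambda}\,G .
\]
Choosing first $C_1$ large enough (depending on $n,p$) that $C/C_1<\tfrac14$, and then $c$ small enough (depending also on $K$) that $e^{cC_2(\Lambda+K)}\le2$, the coefficient of $G$ drops below $\tfrac12$, so $G\le 2e^{c\Lambda}<\infty$; \eqref{JN+} then follows by Chebyshev, and \eqref{JN-} by reversing the time direction.

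The step I expect to be the genuine obstacle is the cross-scale comparison of the constants, $\abs{a_{\tilde R_j}-a_R}\lesssim\Lambda+K$. In classical $\BMO$ this is the trivial telescoping along a chain of nested cubes, but here the defining condition is one-sided in time --- it controls $(u-a_R)^{+}$ only on the future part $S^{+}$ and $(a_R-u)^{+}$ only on the past part $S^{-}$ --- and the relevant halves of $\tilde R_j$ generically land in the ``middle'' of $R$, where nothing a priori relates $u$ to $a_R$. Overcoming this forces one to link $\tilde R_j$ to $R$ by a chain of parabolic rectangles that steps through time, overlapping the future half of one rectangle with the past half of the next; this is possible precisely because of the time lag ($\theta>\mathrm{length}(I)$ in the heuristic oscillation of the introduction), and it is the genuinely parabolic ingredient. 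Together with the routine but fiddly bookkeeping that keeps every auxiliary rectangle $\sigma$-admissible, it is the only point that goes beyond the classical scheme; everything else is standard covering and doubling.
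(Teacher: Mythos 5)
The paper does not prove this lemma; the statement is lifted verbatim from Aimar (Lemma~3.4 of \cite{Aimar1988}), and the paper's only ``proof'' is the observation, made in the surrounding text, that $\PBMO^{\sigma}$ is a $\BMO$ space with lag mapping in the sense of Definition~\ref{def: lag mappings} (with $b=1$, $B=S$). Your opening paragraph reproduces exactly this reduction, so the paper's actual route is correctly identified and your proposal agrees with it.

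Everything after your first paragraph --- the exponential bootstrap via a Calder\'on--Zygmund stopping on $U^{+}(R)$, and the chain-forward-in-time needed for the constant comparison --- is a sketch of Aimar's underlying argument, which the paper deliberately omits. As a sketch it is coherent, and you have isolated the genuinely parabolic point: because the $\PBMO^{\sigma}$ condition is one-sided in time, the cross-scale comparison $\abs{a_{\tilde R_j}-a_R}\lesssim\Lambda+K$ cannot be obtained by nesting as in classical $\BMO$, but only by a chain whose past fragments meet their successors' future fragments, which exists precisely because $\theta>\mathrm{length}(I)$. Two gaps are worth noting if you intend the sketch to be more than heuristic. First, the self-improvement $G\le e^{c\Lambda}+\tfrac12 G\Rightarrow G\le 2e^{c\Lambda}$ is vacuous unless $G<\infty$ a priori; the standard fix is to truncate $u$ at height $N$ (which does not increase the $\PBMO^{\sigma}$ seminorm), run the argument uniformly in $N$, and let $N\to\infty$. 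Second, the claim that every stopping ball $B_j\subset U^{+}(R)$ sits with comparable measure inside $U^{+}(\tilde R_j)$ for an admissible $\tilde R_j$ fails for the first-generation balls whose radius is comparable to that of $U^{+}(R)$: there the lag pushes $S^{-}(\tilde R_j)$ below $U^{+}(R)$, and one must instead absorb that generation directly via the defining $\PBMO^{\sigma}$ inequality applied to $R$. Neither point undermines the strategy; they are the bookkeeping you flagged, and a complete account (which the paper chose not to give, preferring the citation) would have to dispose of them.
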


\begin{remark}
\label{remark: John-Nirenberg}
The general $\BMO$ space with lag mapping (in sense of Definition \ref{def: lag mappings}) satisfies the same inequalities but $U^{\pm}$ from Definition \ref{def: parabolic rectangle} are replaced by $\frac{1}{8}B^{\pm}$ from Definition \ref{def: lag mappings}. This difference is not essential, and all the following arguments will work also in that case. The factor $1/8$ is small enough to make the inequalities \eqref{JN+} and \eqref{JN-} hold, but its role is not important in this paper. In fact, our final result in $\mathbb{R}^{n+1}$ will make the technical notion of fragments unnecessary. 
\end{remark}

Finally, we will need a geometric condition that ensures that the domains we consider are reasonable enough. The following class of (bounded) domains was first defined by Gehring and Martio in \cite{GM1985}.
\begin{definition}[Quasihyperbolic boundary condition]
\label{def: quasihyperbolic}
Let $\Omega \subset \mathbb{R}^{n}$ be a domain. We define its quasihyperbolic metric as 
\[k(x,y) := \inf_{\gamma_{xy}} \int_{\gamma_{xy}} \frac{1}{d(z, \Omega^{c})} \, {\rm d} s(z), \]
where the infimum is over curves connecting $x$ and $y$. A domain is said to satisfy a \textit{quasihyperbolic boundary condition} if there is a fixed $x_0 \in \Omega$ and a constant $K$ such that for all $y \in \Omega$
\[k(x_0,y) \leq K \log \frac{K}{d(y,\Omega^{c})} .\]
\end{definition}

\section{Chain lemma}
When proving a local-to-global result for a function $u$ on some domain, the most crucial part is to get information about the behavior of $u$ close to the boundary of the domain. We will use a chaining technique composing various ideas from the known results in \cite{Maasalo2008}, \cite{Staples2006} and \cite{Buckley1999}. The original chaining techinques do not work as such since the parabolic $\BMO$ condition compells us to take into account the special role played by the time variable. 

The new problem is that the rectangles in the chain cannot be located as freely as they could in the classical case. The chain has to have a direction in time, each step in spatial dimensions forcing us to take certain step in time. Thus the first challenge is to ensure that for each point in the space-time cylinder there is enough time to move to the spatial point we consider the center. This problem can be solved by imposing an artificial upper bound on the size of spatial steps, which is reflected as the $p$th power to the time variable due to the parabolic scaling. 

\begin{lemma}
\label{lemma: chain}
Let $\Omega \subset \mathbb{R}^{n}$ be a domain and $\Delta = \Omega \times (0,T)$, $0 < \beta < 1$; $\alpha,\alpha', \delta > 0$ and let $U_{(x,t)}^{+}$ be an upper fragment of a parabolic rectangle centered at $(x,t) \in \Omega \times (\delta ,T)$ with spatial sidelength $l_{x,t}'= \min \{l_{x,t},\alpha' q \}$. Here
\begin{equation}
\label{eq: ch lemma choice condition}
l_{x,t} :=  \min \{\beta d(x,\Omega^{c}), \beta (T-t)^{1/p}, \alpha q\},
\end{equation}
where $q = \sup_{x \in \Omega} \textrm{length}(\gamma_{xz})$ is the maximal length of quasihyperbolic geodesics connecting points $x$ to a fixed $z \in \Omega$. 

Under these assumptions, the parameter $\alpha$ can be chosen so that there is a chain of parabolic rectangles $\mathcal{P}(U_x^{+}) = \{R_i\}_{i=1}^{k_{(x,t)}}$ with the following properties: 
\begin{enumerate}
\item[(i)] $R_{1} = R_{(x,t)} $, $R_{k_{(x,t)}}$ is centered at $(z,\tau(x,t))$ and it has spatial sidelength $l_{z,\tau(x,t)}$. For all $j$ we have that $\beta^{-1} R_j \subset \Delta $.
\item[(ii)] $ \abs{U_i^{-} \cap U^{+}_{i+1}} \gtrsim_{p, n, \beta} \max\{ \abs{R_{i}}, \abs{R_{i+1}} \}$ as $1\leq i < k_x$.
\item[(iii)] $0 \leq t- \tau(t,x)  \leq  q^{p}  \eta  $, where $\eta \eqsim_{n,\beta} \alpha$.
\item[(iv)] $k_{(x,t)} \lesssim_{p,n, \beta  } k(x,z) + \log \frac{T}{T-t}+ \log \left( \frac{\alpha}{\alpha'} + 1 \right) + \frac{1}{\alpha} +1 $.
\end{enumerate} 
\end{lemma}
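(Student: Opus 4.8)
The plan is to build the chain in two stages: a purely spatial skeleton coming from a quasihyperbolic geodesic, followed by an assignment of time coordinates, the latter being the point where the parabolic situation departs from the elliptic chaining of \cite{Maasalo2008}, \cite{Staples2006}, \cite{Buckley1999}. For the skeleton I would fix a quasihyperbolic geodesic $\gamma$ joining $x$ to $z$ inside $\Omega$ (of length $\le q$) and run the usual Whitney-type chaining along it: choose consecutive points $x = y_1, y_2, \dots$ on $\gamma$ together with scales $L_i$ of size $\asymp\min\{\beta d(y_i,\Omega^c),\alpha q\}$, so that consecutive scales are comparable ($L_{i+1}/L_i$ lies in a fixed compact subset of $(0,\infty)$) and consecutive spatial cubes overlap proportionally, inserting a bounded-factor ``rescaling'' step with no spatial motion whenever $\gamma$ enters or leaves a narrow part. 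The standard count gives $\lesssim k(x,z)+1$ genuine steps, with $\lesssim 1/\alpha$ extra ones forced by the cap $\alpha q$ (where this cap is active a step advances only $\asymp\alpha q$ along $\gamma$, whose length is $\le q$), while the number of rescaling steps is controlled by the variation of $\log d(y_i,\Omega^c)$ along $\gamma$, which for a geodesic is $\lesssim k(x,z)$.

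Now I would replay this while bookkeeping time. Put $R_1 = R_{(x,t)}$, of spatial sidelength $l'_{x,t}$, and first run a ramp-up phase at the fixed spatial point $x$, doubling the sidelength from $l'_{x,t}$ up to $l_{x,t}$; this costs $\lesssim\log(l_{x,t}/l'_{x,t}+1)\le\log(\alpha/\alpha'+1)$ steps. Then follow the skeleton, but at each step also enforce $L_i\le\beta(T-t_i)^{1/p}$, again subdividing by bounded-factor rescalings when this additional cap would otherwise push $L_{i+1}$ too far below $L_i$. Having $(y_i,t_i,L_i)$, set
\[
t_{i+1} := t_i - \tfrac{3}{4}\bigl(L_i^p + L_{i+1}^p\bigr),
\]
which is exactly the choice making the time intervals of the thin fragments $U_i^-$ and $U_{i+1}^+$ share a common center; taking the spatial jump $|y_i-y_{i+1}|$ a small fixed fraction of $\min\{L_i,L_{i+1}\}$ and invoking scale comparability then gives $|U_i^-\cap U_{i+1}^+|\gtrsim_{p,n,\beta}\min\{|R_i|,|R_{i+1}|\}\gtrsim_{p,n,\beta}\max\{|R_i|,|R_{i+1}|\}$, which is (ii). Denote by $k_{(x,t)}$ the number of rectangles produced and set $\tau(x,t):=t_{k_{(x,t)}}$.

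It remains to check (i), (iii), (iv). For (i), spatial containment $\beta^{-1}R_j\subset\Delta$ is immediate from $L_j\le\beta d(y_j,\Omega^c)$ (once the cube-versus-ball constants are fixed); the top time endpoint $t_j+\beta^{-p}L_j^p$ stays below $T$ because $L_j\le\beta(T-t_j)^{1/p}$; and the bottom endpoint $t_j-\beta^{-p}L_j^p$ stays above $0$ once the total time drop is small, which is where the hypothesis $t>\delta$ enters: we shrink $\alpha$ (depending on $n,p,\beta$ and on $\delta/q^p$) so that $t-\tau(x,t)<\delta$, which simultaneously gives the left inequality in (iii). For the right inequality in (iii), $t-\tau(x,t)=\sum_i(t_i-t_{i+1})\asymp\sum_i L_i^p$; grouping the steps by dyadic scale and using $L_i\lesssim\alpha q$, scale comparability, and $\sum_i L_i\lesssim q$ on the moving steps (the ramp-up and cap-transition steps contributing only a geometric series $\lesssim(\alpha q)^p$) bounds the sum by $q^p$ times a constant multiple of $\alpha$, i.e.\ $\eta\eqsim_{n,\beta}\alpha$. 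For (iv) we add the four families: $\lesssim\log(\alpha/\alpha'+1)$ ramp-up steps; $\lesssim k(x,z)+1$ genuine Whitney steps (with all rescalings absorbed, using the variation bound above together with $\lesssim_{\beta,p}\log\frac{T}{T-t}$ rescalings charged to the time cap); $\lesssim 1/\alpha$ steps where the $\alpha q$-cap is active; and $\lesssim_{\beta,p}\log\frac{T}{T-t}$ steps where the $\beta(T-t_i)^{1/p}$-cap is active. The last count is the crucial one: such a step has $L_i^p\asymp\beta^p(T-t_i)$, hence $T-t_{i+1}=(T-t_i)\bigl(1+\Theta(\beta^p)\bigr)$, so $T-t_i$ grows geometrically and can only double $O_{\beta,p}(1)$ times before reaching $T$.

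The heart of the argument, and the reason the elliptic proofs do not transfer verbatim, is the rigid coupling between horizontal and vertical motion: a spatial step of size $\asymp L_i$ forces a downward time step of size $\asymp L_i^p$, so the chain must at once keep consecutive scales comparable (otherwise the fragment overlap in (ii) fails), remain inside $\Delta$ even though time marches toward $0$, and obey $L_i\le\beta(T-t_i)^{1/p}$, which feeds back on itself since a small $T-t$ forces small scales and hence a slow descent. The two resolutions are that this feedback is self-limiting ($T-t_i$ grows geometrically, costing only the $\log\frac{T}{T-t}$ term) and that the cap $\alpha q$ with $\alpha$ chosen small both confines the total time drop to scale $\alpha q^p$ and adds only $1/\alpha$ steps; once these are in place, what is left is the standard quasihyperbolic Whitney bookkeeping.
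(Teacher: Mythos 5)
Your proposal is correct and follows essentially the same route as the paper's proof: you chain along the quasihyperbolic geodesic, use the same recursion $t_{i+1}=t_i-\tfrac34(L_i^p+L_{i+1}^p)$ to keep the fragments centered, prove (ii) via comparability of consecutive scales, and in (iv) split the rectangle count into the same four families (Whitney steps controlled by $k(x,z)$, purely temporal steps controlled by geometric growth of $T-t_i$ giving $\log\tfrac{T}{T-t}$, $\alpha q$-cap steps giving $1/\alpha$, and the initial doubling ramp giving $\log(\alpha/\alpha'+1)$). The paper packages the scale-comparability step via an auxiliary metric $d'$ and the obstacle set $\Xi=\Omega^{c}\times\mathbb{R}\cup\Omega\times(T,\infty)$ while you argue it directly, and you present the construction as a spatial skeleton followed by time bookkeeping rather than building all coordinates simultaneously, but these are expository differences, not a different argument.
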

\begin{proof}
We start by assuming that $\alpha' q \geq l_{x,t}$. The first rectangle $R_1$ is obviously given. We denote its center $p_1$, and we denote the center of its upper fragment $p_1'$. Suppose that $R_j$ (centered at $p_j = (y_j,t_j)$) has been chosen. We connect its spatial center $y_j$ to $z$ with the quasihyperbolic geodesic $\gamma$ and find the point $y_{j+1}$ where $\gamma$ exits $Q_j = Q(y_j,l_{y_j,t_j})$. Set 
\begin{align*}
t_{j+1}' &= t_{j} - \frac{3}{4}l_{j}^{p} \\
p_{j+1}' &= (y_{j+1},t_{j+1}') \\ 
l_{j+1} &= l_{(y_{j+1},t'_{j+1})} \\
t_{j+1} &= t_{j+1}' - \frac{3}{4}l_{j+1}^{p} = t_j-\frac{3}{4}(l_j^{p} + l_{j+1}^{p})\\
p_{j+1} &= (y_{j+1},t_{j+1}).
\end{align*}
We define $R_{j+1}$ by extending the spatial cube $Q_{j+1}$ to a parabolic rectangle $R_{j+1}$ centered at $p_{j+1}$ with sidelength $l_{j+1}$ so that its upper fragment is centered at $p_{j+1}'$, which is also the temporal center of $U_j^{-}$ (see Figure \ref{fig:chl}). One of the two consecutive fragments ($U_j^{-}$ and $U_{j+1}^{+}$) has its temporal projection contained in the other, and $y_{j+1} \in \partial Q(y_j,l_j)$, so in order to establish (ii), it suffices to prove that $l_j \eqsim_{\beta,n} l_{j+1}$. 

\begin{figure}
\centering
\includegraphics[scale=0.6]{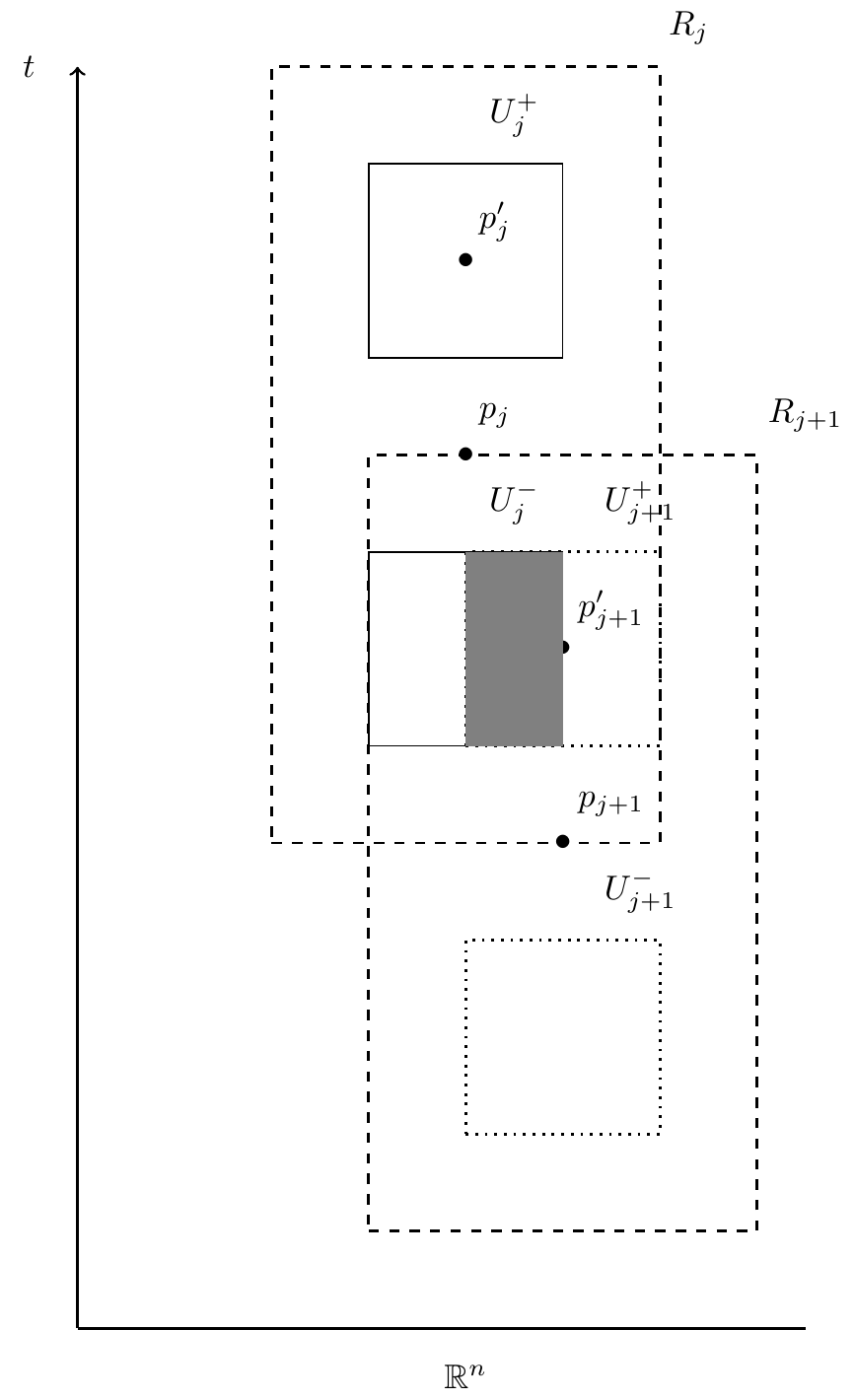}
\begin{caption}
{\label{fig:chl}Schematic picture on position of two subsequent rectangles in the chain. For clarity, the lengths and distances are not in scale.}
\end{caption}
\end{figure}

We define an auxiliary metric 
\[d'((x_1,t_1),(x_2,t_2)) = \max\{ \abs{x_1-x_2} , \abs{t_1-t_2}^{1/p} \}.\] 
Denoting $\Xi = \Omega^{c} \times \mathbb{R} \cup \Omega \times (T,\infty)$, the choice condition \eqref{eq: ch lemma choice condition} can be reformulated as 
\[l_{x,t} = \min \{\beta d'((x,t), \Xi), \alpha q \}.\] 
If $l_{i} = \alpha q  = l_{i+1}$, there is nothing to prove, so assume  that both $l_{i} = \beta d' (p_i', \Xi )$ and $l_{i+1 } = \beta d' (p_{i+1}',\Xi )$. Then
\begin{equation}
\label{eq: ch lemma 11}
l_i \leq \beta( d'(p_i',p_{i+1}') + d'(p_{i+1}', \Xi)) \leq \beta l_i + l_{i+1}  
\end{equation}
and 
\begin{equation}
\label{eq: ch lemma 22}
l_{i+1} \leq \beta( d'(p_i',p_{i+1}') + d'(p_{i}', \Xi))  \leq (\beta + 1) l_i .
\end{equation}
If, in turn,  
\[l_{i+1} = \beta  d' (p_{i+1}', \Xi ) \leq \alpha q   = l_{i},\]
then one direction is clear, and for the other, \eqref{eq: ch lemma 11} still holds. The last alternative $l_{i} = \beta  d '(p_{i }', \Xi ) \leq \alpha   q = l_{i+1}$ is done similarly by  \eqref{eq: ch lemma 22}. Thus (ii) holds.

For (iii), note that since 
\[\sum_{i} l_i \leq N  q, \]
where $N$ depends only on $n $ and $\beta$, the choice $\alpha^{p-1} \leq \frac{\eta }{2 N} $ yields the same bound for all $(l_i/q)^{p-1}$, and consequently
\[t- \tau(x,t) \leq \sum_{i} 2l_i^{p} \leq \frac{\eta q^{p-1}  }{N} \sum_{j} l_j \leq \eta q^{p}. \]

To prove (iv), assume first that $\alpha$ does not affect the chain length. It is straightforward to check that for all pairs of consecutive indices $(i,i+1)$ where both sidelengths $l_i$ are not determined by the temporal dimension (that is, one or both of them are determined by the spatial dimensions); $l_i$, $l_{i+1}$, $d(x,\Omega^{c})$ and $\abs{\gamma_{y_iy_{i+1}}}$ are all comparable for $x \in \gamma_{y_iy_{i+1}}$. We call subchains with this kind of center points $p_i$ \textit{mixed} and label the corresponding segments of $\gamma$ by $\gamma_{ i_j,i_j +1  }$. Then
\begin{align*}
k(x,z) &= \int_{\gamma} \frac{1}{d(y,\Omega^{c})} \, {\rm d}s(y) = \sum_{i}\int_{\gamma_{y_iy_{i+1}}} \frac{1}{d(y,\Omega^{c})} \, {\rm d}s(y) \\
&\geq \sum_{j=1}^{k_x} \int_{\gamma_{i_j,i_j +1}} \frac{1}{d(y,\Omega^{c})} \, {\rm d}s(y)   \gtrsim k_x. 
\end{align*}
Multiplying $k_x$ by $2$, we may assume that it controls the actual number of points $p_i$ in mixed subchains.

It remains to estimate the gaps between mixed subchains. These are filled by purely temporal subchains, where the sidelengths of consecutive rectangles are determined by the distances from $\Omega \times \{T\}$. Here the fact $t_{i+1}' = t_{i }  -  \frac{3}{4}l_i^{p} = t_i'- \frac{3}{2}l_i^{p}$  implies 
\begin{align*}
l_{i+1}^{p} &= \beta^{p}( T - t_{i+1}') = \beta^{p}(T - t_i') + \frac{3}{2} \beta^{p} l_i^{p} =  \left(1+ \frac{3}{2}\beta ^{p} \right) l_i^{p} =: Ml_i^{p}.
\end{align*}
Given two temporal subchains with no temporal subchain in between, the successor will start with a rectangle greater than the ending rectangle of the predecessor. Moreover, the last rectangle in a purely temporal subchain is also a starting rectangle for the following mixed subchain. By the convention on $k_x$, these will be counted to both mixed and temporal chains. Without decreasing the actual number of counted rectangles, we may join the temporal subchains by replacing the starting rectangles of the purely temporal subchains by the ending rectangles of the predecessors.

This new chain of rectangles with centers $\{(y_{i_\iota},t_{i_\iota})\}_{\iota =0}^{k}$ satisfies
\begin{align*}
T \gtrsim \sum_{\iota =0}^{k} 2 l_{i_\iota}^{p} \geq 2l_{i_0}^{p} \frac{M^{k+1}-1}{M-1} \gtrsim C l_{i_0}^{p}M^{k - C} 
\end{align*}
where all dependencies are on $\beta$. Especially
\[k \lesssim \log \frac{T}{l_{i_0}^{p}} + 1.\]
Adding now the worst contribution of $\alpha q$-sided rectangles, we have established
\[k_{x,t} \lesssim k(x,z) + \log \frac{T}{T-t} + \frac{1}{\alpha} + 1.\]
Up to the additional assumption on $\alpha' \geq l_{x,t}$, this is (iv).

To get rid of the assumption $\alpha'q \geq l_{x,y}$, we just start the construction by doubling the sidelength at each step until the choice condition of $l_{y,t}$ becomes active. It can be checked that this does not affect other bounds than the number of rectangles, and here the effect is at worst the claimed $\log (\alpha /\alpha' + 1) +1$.
\end{proof}

\begin{remark}
\label{re: vertivcal}
Given two parabolic rectangles $R= Q \times (t-L^{p}, t + L^{p})$ and $R'= Q \times (t'-L^{p}, t'+L^{p})$ such that $T \geq t'-t \geq M  L$ for some big $M$ (say $M \geq  100$), then $R'$ can be connected to $R$ with a chain $\{R_i\}_{i=1}^{k}$ satisfying (ii) of Lemma \ref{lemma: chain} (with dimensional constant) and $k \leq CM$ where $C$ is a numerical constant. This is practically done by looking at chains constructed as in Lemma \ref{lemma: chain}, but using a constant spatial cube $Q$, and choosing the midpoint of $R_{i+1}$ asking only that  $\abs{{\prt R_{i} \cap R_{i+1} }} \geq \frac{L^{p}}{M}$ is satisfied ($\prt$ means the projection on the temporal variable). This gives us flexibility to squeeze or stretch the chain in order to synchronize the endpoint rectangles provided by the previous lemma.
\end{remark}

\section{A global John-Nirenberg inequality}
In this section we will prove one of the main results of this paper, the global John-Nirenberg inequality. 

\begin{theorem}
\label{thm: john-nirenberg}
Let $\Omega$ satisfy a quasihyperbolic boundary condition. If $u$ is a function in $ \PBMO^{\sigma} ( \Omega \times (0,T))$, then for every $\delta  > 0$ there is $c \in \mathbb{R}$ and constants $A$ and $B$ depending on $\delta$, $\sigma$, $p$, $n$, $\norm{u}_{\PBMO^{\sigma}}$ and the data of $\Omega$, such that
\[\abs{  \Omega \times (\delta q^{p} , T) \cap \{ (u-c)^{+} > \lambda\}} \leq A e^{-B\lambda} \abs{\Omega \times (\delta q^{p} , T)}.\]
Here $q$ is again the maximal length of quasihyperbolic geodesics.
\end{theorem}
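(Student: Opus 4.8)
\emph{Strategy.} The plan is to deduce the global estimate from the local John--Nirenberg inequalities \eqref{JN+}--\eqref{JN-} by transporting them along the chains of Lemma~\ref{lemma: chain}. Fix the base point $z\in\Omega$ of Definition~\ref{def: quasihyperbolic} and a parabolic rectangle $R_0$ placed near the bottom of $\Delta=\Omega\times(0,T)$ with $\sigma R_0\subset\Delta$ and with every point of $\Omega\times(\delta q^p,T)$ lying \emph{above} its center in time; set $c:=a_{R_0}$. Since \eqref{JN+}--\eqref{JN-} only control $u$ on \emph{upper fragments}, the first step is a Whitney-type construction covering $\Omega\times(\delta q^p,T)$ by a boundedly overlapping family of upper fragments $W_j=U^+_{(x_j,t_j)}$, where $R_{(x_j,t_j)}$ is a rectangle of the kind appearing in Lemma~\ref{lemma: chain} with $\sigma R_{(x_j,t_j)}\subset\Delta$ and $(x_j,t_j)\in\Omega\times(\delta' q^p,T)$ for a suitable $\delta'<\delta$. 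Such a covering exists once $\alpha$ and $\beta$ are chosen small relative to $\delta$ and $\sigma$: any $(y,s)$ with $s>\delta q^p$ is the center of the upper fragment of a rectangle whose center is placed slightly earlier in time, with spatial sidelength $\eqsim\min\{\beta d(y,\Omega^c),\beta(T-s)^{1/p},\alpha q\}$, and the backward time-room this needs is precisely what forces the excluded slab $\{t\le\delta q^p\}$.

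\emph{Transport along chains.} For each $j$ I would apply Lemma~\ref{lemma: chain} to produce a chain $R_1=R_{(x_j,t_j)},R_2,\dots,R_{k_j}$ ending at a rectangle centered at $(z,\tau_j)$, and then extend it to $R_0$ by a vertical chain as in Remark~\ref{re: vertivcal} (preceded, if necessary, by a short sub-chain that matches the spatial sidelengths), obtaining an augmented chain of total length $\tilde k_j$ with $\tilde k_j\lesssim k(x_j,z)+\log\frac{T}{T-t_j}+1$ by (iv) of Lemma~\ref{lemma: chain} and the length bound of Remark~\ref{re: vertivcal}. Property (ii) gives $\abs{U_i^-\cap U_{i+1}^+}\gtrsim\max\{\abs{R_i},\abs{R_{i+1}}\}$; since $U_i^-$ and $U_{i+1}^+$ occupy the same time location, comparing the \emph{large} sets obtained from \eqref{JN-} on $U_i^-$ (where $u\ge a_{R_i}-C_0$) and from \eqref{JN+} on $U_{i+1}^+$ (where $u\le a_{R_{i+1}}+C_0$) inside their overlap forces the one-sided comparison $a_{R_i}\le a_{R_{i+1}}+C_0$, with $C_0$ depending only on $A$, $B$, $p$, $n$ (this is where the time lag makes the comparison one-sided, in the direction of decreasing time, i.e.\ from $W_j$ towards $R_0$). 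Telescoping along the augmented chain gives $a_{R_{(x_j,t_j)}}\le c+C_0\tilde k_j$, and feeding this into \eqref{JN+} on $W_j=U^+_{(x_j,t_j)}$ yields, for all $\lambda>0$,
\[
\abs{W_j\cap\{(u-c)^+>\lambda\}}\ \le\ A\,e^{BC_0\tilde k_j}\,e^{-B\lambda}\,\abs{W_j}\ \le\ A\,e^{-B\lambda/2}\,\abs{W_j}\quad\text{once }\lambda\ge 2C_0\tilde k_j,
\]
while $\abs{W_j\cap\{(u-c)^+>\lambda\}}\le\abs{W_j}$ trivially.

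\emph{Summation.} Since the $W_j$ cover $\Omega\times(\delta q^p,T)$ with bounded overlap, splitting according to whether $\tilde k_j\le\lambda/(2C_0)$ gives
\[
\abs{\Omega\times(\delta q^p,T)\cap\{(u-c)^+>\lambda\}}\ \lesssim\ A\,e^{-B\lambda/2}\,\abs{\Omega\times(\delta q^p,T)}\ +\sum_{j:\ \tilde k_j>\lambda/(2C_0)}\abs{W_j},
\]
so it remains to see that $\sum_{j:\tilde k_j>M}\abs{W_j}\lesssim e^{-\rho M}$ for some $\rho>0$. Using $\tilde k_j\lesssim k(x_j,z)+\log\frac{T}{T-t_j}+1$ and bounded overlap,
\[
\sum_j e^{\rho\tilde k_j}\abs{W_j}\ \lesssim\ \int_{\Omega\times(\delta q^p,T)}e^{\rho k(x,z)}\Big(\tfrac{T}{T-t}\Big)^{\rho}\dmu\ \le\ \Big(\int_\Omega e^{\rho k(x,z)}\dx\Big)\Big(\int_0^T\big(\tfrac{T}{T-t}\big)^{\rho}\dt\Big),
\]
which is finite for $\rho$ small: the spatial factor is finite precisely because $\Omega$ satisfies the quasihyperbolic boundary condition \cite{SS1991}, and the temporal factor is finite for $\rho<1$. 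Hence $\sum_{j:\tilde k_j>M}\abs{W_j}\le e^{-\rho M}\sum_j e^{\rho\tilde k_j}\abs{W_j}\lesssim e^{-\rho M}$, and taking $M=\lambda/(2C_0)$ and $B':=\min\{B/2,\rho/(2C_0)\}$ finishes the proof after relabeling constants.

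\emph{Main obstacle.} The genuinely delicate part is the geometric bookkeeping in the first two steps: one must produce a single Whitney-type covering by upper fragments of rectangles that are simultaneously admissible for \eqref{JN+}--\eqref{JN-}, legitimate starting rectangles for Lemma~\ref{lemma: chain} (whose sidelengths are constrained through \eqref{eq: ch lemma choice condition} and whose chains need backward time-room), and boundedly overlapping; and, crucially, one must route \emph{every} chain to the \emph{same} rectangle $R_0$, which requires Remark~\ref{re: vertivcal} together with careful control of the extra vertical length and of the sign of the one-sided comparison (hence the placement of $R_0$ below $\Omega\times(\delta q^p,T)$). Once the geometry is set up, the analytic content is just the two-sided local John--Nirenberg inequality plus the classical exponential integrability of the quasihyperbolic metric.
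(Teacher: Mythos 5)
Your proof follows essentially the same strategy as the paper's: a Whitney-type covering by upper fragments, transport of the comparison constants $a_R$ along chains terminating at a single rectangle $R_0$ (the paper's $\mathfrak{R}$), a telescoping bound of the form $a_{\text{start}} \le c + C_0\cdot(\text{chain length})$ obtained by locating a point in each overlap $U_\iota^-\cap U_{\iota+1}^+$ where both one-sided John--Nirenberg estimates hold, and a final summation that converts the chain-length bound (iv) of Lemma~\ref{lemma: chain} into exponential decay via the quasihyperbolic boundary condition. The only noticeable divergence is in the last step: where the paper slices $\Omega\times(\delta_0,T)$ explicitly into dyadic time layers $Z_j$ and spatial shells $\Omega_k$ and then sums the separate contributions $I_{ijk}$ and $II_{ijk}$ using $|\Omega_k|\lesssim 2^{-\nu k}|\Omega|$ and $|Z_j|\lesssim 2^{-j}$, you replace this bookkeeping with the compact Chebyshev/Fubini estimate $\sum_j e^{\rho\ti k_j}|W_j|\lesssim \int e^{\rho k(x,z)}(T/(T-t))^\rho\dmu$, invoking the exponential integrability of the quasihyperbolic metric directly; this buys brevity at the cost of hiding the explicit dependence that the paper's choice of $\alpha_j$ is tracking. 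A small point worth making explicit: the bound $\ti k_j\lesssim k(x_j,z)+\log\tfrac{T}{T-t_j}+1$ you use absorbs the terms $\log(\alpha/\alpha'+1)+1/\alpha$ from Lemma~\ref{lemma: chain}(iv) together with the vertical-extension count from Remark~\ref{re: vertivcal}; with a fixed $\alpha$ (depending on $\delta,\sigma$) and with the observation that the vertical steps adapt to $(T-t)^{1/p}$ until a bounded sidelength is reached, this is indeed comparable to $\log\tfrac{T}{T-t_j}$ up to a $\delta$-dependent constant, so the argument closes.
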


\begin{proof}
Choose $ \sigma < \beta^{-1}  $ and let $\alpha>0$ be a constant to be determined later. For each $y \in \Omega$, let $5l_y = \min \{\beta d(y,\Omega^{c}), \alpha q \}$ and denote $Q_y = Q(y,l_y)$. Using 5-covering lemma, we may extract a countable collection $\mathcal{W}_{\alpha} =  \{Q_i\}_{i} :=\{ 5 Q_{y_i}\}_{i} $ so that the cubes $\{Q_{i}\}_{i}$ cover $\Omega$ and $\frac{1}{5}Q_i$ are pairwise disjoint. Moreover, we may ask $Q_1$ to be centered at $z$, the distinguished point of $\Omega$. The symbols $\mathcal{W}_\zeta$ will refer to similar constructions with additional size bounds $5l_y \leq \zeta q$. We denote $\delta_0 = \delta  q^{p}$.

First look at a fixed time level $\Omega \times \{ \delta_0 \} $. We extend every cube $Q_i$ to be an upper fragment of a parabolic rectangle $R_i$ having its lower face on $\Omega \times\{\delta \}$. Using Lemma \ref{lemma: chain}, we may construct chains $\mathcal{P}(U_i^{+})$ connecting these $U_i^{+}$, upper fragments of $R_i$, to rectangles with spatial projections coinciding with $Q_1$. These rectangles may, however, be centered somewhere in $\Omega  \times (-\infty, 0)$, since in the construction of the parabolic chain, connecting the lower fragments of rectangles with the upper fragments of their successors makes the chain flow down to the past. To deal with this, we choose $\eta$ in (iii) of Lemma \ref{lemma: chain} to be $(10\sigma)^{-10}  \delta  $ so that the final rectangle will definitely be in $\Omega \times ( \frac{1}{2} \delta_0   ,T )  $ and admissible in the definition of $\PBMO^{\sigma}$. This imposes an upper bound $b$ on $\alpha$ (depending only on $\delta$ and $\sigma$). 

Next we slice the cylinder $\Omega \times (\delta_0, T)$ both spatially and temporally. We begin with the time. Denote 
\begin{align*}
\tau_j &= T - 2^{-j} (T- \delta_0), \  \textrm{as}  \ j \geq 0 \quad  \textrm{and}  \quad \\
Z_j &= \Omega \times (\tau_j, \tau_{j+2} ).
\end{align*}
The union of these $Z_j$ is included in $\Omega \times (\delta_0, T)$ and their overlap is bounded by $2$. When it comes to space, we define \[\Omega_k = \Omega \cap \{ d(x,\Omega^{c}) < 2^{-k}   \}.\] 
This lets us partition $\mathcal{W}_{\alpha} $ so that \[\mathcal{W}_{\alpha}^{k} = \{Q_x \in \mathcal{W}_{\alpha}: x \in \Omega_k \setminus \Omega_{k+1} \}.\] 
Since for each $y \in Q(x,l) \in \mathcal{W}_{\alpha}^{k}$ we have that
\[d(y,\Omega^{c}) \leq \abs{x-y} + d(x,\Omega^{c}) \leq ( \beta + 1 ) d(y,\Omega^{c}) \leq  2^{-k+1}, \]
the inclusion 
\[\bigcup_{Q \in \mathcal{W}_{\alpha}^{k}} Q \subset \Omega_{k-1}\]
will follow.

Then we cover $\Omega \times (\tau_j, \tau_{j+1} )$. These subsets of $Z_j$ will in turn cover the whole space-time cylinder $\Omega \times (\delta_0,T)$. Choose 
\[\alpha_j = \min \left\lbrace    b, \frac{\beta}{q}  \left( \frac{T-\delta_0}{ 2^{j+2}}\right)^{1/p} \right\rbrace  .\] 
Take the cover $\mathcal{W}_{\alpha_j}$, extend its cubes to upper fragments of parabolic rectangles $U_i^{j+}$. At each spatial $Q_i$, stack these parabolic fragments pairwise disjointly minimal amount to cover the temporal interval $(\tau_j, \tau_{j+1} )$. At the future end, the stack will not exceed $\tau_{j+2}$. Label the fragments as $\mathcal{Z}_j = \{U_i^{jk+}\}_{ijk}$. 

For each $U_{i}^{jk+}$ we form the chain of Lemma \ref{lemma: chain} with $\alpha = b$ and $\alpha' = \alpha_j$. It has $m_i^{j}$ rectangles. We want, however, to make all final rectangles coincide not only spatially but also temporally at, say, $\mathfrak{R}$. In order to do that, we must continue the chain of $U_{i}^{jk}$ with $m_j$ rectangles. According to Remark \ref{re: vertivcal}, recalling the choice of $\eta $, and carefully checking the interdependence of $j$ and temporal slicing of $\Omega \times (\delta_0,T)$, we see that this can be done with bound
\[m_j \lesssim \sum_{\iota=0}^{j} \frac{2^{-j}(T-\delta_0)}{(\alpha_j q)^{p}} \lesssim  \frac{j}{\delta}   .\]
Call these continued chains $\mathcal{C}(U_{i}^{jk+})$. 

Now we are in position to prove the claim. Recall that $a_R$ is always a constant from the $\PBMO^{\sigma}$ condition \eqref{def: bmo condition}. Let $C_0$ be the constant in part (ii) of Lemma \ref{lemma: chain}. For a while, we denote $\mathcal{C}(U_{i}^{jk+}) = \{P_\iota\}_{\iota=1}^{m_{i}^{j} + m_j}$. We will use an argument from \cite{Staples2006}. By a suitable choice of $\lambda_0$ (depending only on $\norm{u}_{\PBMO^{\sigma}}$, $\beta$ and the dimension) in John-Nirenberg inequalities \eqref{JN+} and \eqref{JN-}, we get  
\begin{align*}
\abs{E_{\lambda_0,\iota}^{-}} := \abs{ U_\iota^{-} \cap \{ (a_{P_\iota}-u)^{+} > \lambda_0\}} &\leq  \frac{C_0}{2} \abs{U_{\iota}^{-}}, \\
\abs{E_{\lambda_0,\iota + 1}^{+}} := \abs{  U_{\iota+1}^{+}\cap \{ (u-a_{P_{\iota  +1 }})^{+} > \lambda_0\}} &\leq  \frac{C_0}{2} \abs{U_{\iota + 1}^{+}}, \\
\end{align*} 
and
\[\abs{ (U_\iota^{-} \cap U_{\iota + 1}^{+}) \setminus (E_{\lambda_0,\iota}^{-} \cup  E_{\lambda_0,\iota + 1}^{+})} > 0. \]
This indicates that there is $p_\iota \in ( U_\iota^{-} \cap U_{\iota + 1}^{+}) \setminus (E_{\lambda_0,\iota}^{-} \cup  E_{\lambda_0,\iota + 1}^{+})$, and consequently
\begin{align*}
(a_{P_1}-a_{\mathfrak{R}})^{+} &\leq  \sum _{\iota =1}^{m_i^{j}+m_j-1} (a_{P_\iota}-a_{P_{\iota + 1}})^{+} \\
&\leq  \sum _{\iota =1}^{m_i^{j}+m_j-1} (a_{P_\iota}-  u(p_\iota))^{+} + (u(p_\iota)-a_{P_{\iota + 1}})^{+}  \\
&\lesssim_{\lambda_0}   m_i^{j} + m_j.
\end{align*}

Now for every $\lambda > 0$, we get
\begin{align}
\label{equ: SS final}
 \abs{U_i^{jk+} \cap & \{ (u-a_{\mathfrak{R} })^{+} > \lambda \}} \nonumber \\
& \leq    \abs{U_i^{jk+}  \cap \{(u-a_{R_{i}^{jk}})^{+} +  (a_{P_{\iota} } -a_{\mathfrak{R} })^{+} > \lambda\}} \nonumber \\
& \lesssim   \abs{U_i^{jk+}  \cap \{(u-a_{R_j^{jk} })^{+} > \lambda /2 \}} \nonumber \\ 
& \hspace{2cm} +  \abs{U_i^{jk+} \cap \{ C(m_i^{j}+m_j)   > \lambda/2 \}} .
\end{align}
By John-Nirenberg, the first term can be estimated by
\begin{equation*}
\abs{U_i^{jk}  \cap \{(u-a_{R_i^{jk} })^{+} > \lambda /2 \}} \lesssim \abs{U_i^{jk}} e^{-B\lambda}.
\end{equation*}
Moreover
\[\sum_{i,j,k} \abs{U_i^{jk} }= \sum_{j} \sum_{U \in \mathcal{Z}_j } \abs{U_i^{jk}}  \lesssim \sum_{j} \abs{Z_j} \lesssim \abs{\Omega \times (\delta_0 , T)}, \]
so
\begin{equation}
\label{equ SS john1}
\sum_{i,j,k} \abs{U_i^{jk}  \cap \{(u-a_{R_i^{jk} })^{+} > \lambda /2 \}} \lesssim e^{-B\lambda} \abs{\Omega \times (\delta_0 , T)}.
\end{equation}

We then turn to the second one. Since $U_i^{jk} \subset Z_j$, by Lemma \ref{lemma: chain} we have that 
\begin{equation}
\label{eq q-dependence}
m_i^{j} + m_j \lesssim_\alpha k(y_{i}, z) + \frac{j}{\delta}+ j \log \frac{q}{(T-\delta_0)^{1/p}}  + 1.
\end{equation}
Denote 
\begin{align*}
&  \abs{U_i^{jk+} \cap \{ k(y_i,z) + 1  > C \lambda  \}}  +  \abs{U_i^{jk+} \cap \{   j     > C \lambda  \}}   \\
 &= I_{ijk} + II_{ijk}  .
\end{align*}
Since $\Omega$ satisfies a quasihyperbolic boundary condition, it holds that $\abs{\Omega_{k}} \lesssim 2^{-\nu k}  \abs{\Omega}$ for some $\nu > 0$ (see \cite{Buckley1999} and \cite{KR1997}). On the other hand, the quasihyperbolic boundary condition itself,
\[k(y_i,z)  \lesssim \log \frac{K}{d(y_i,z)},\]
gives that if $Q_i \in \mathcal{W}_{\alpha_j}^{k}$, then $k(y_i,z) \lesssim 1 + k$. Denoting $t_i ^{jk} = \abs{\prt U_{i}^{jk+}}$, we may compute (since $Q_i$ have bounded overlap)
\begin{align}
\label{equ SS john2}
\sum_{i,j,k} I_{ijk} &\leq   \sum_{i,j,k} t_i^{jk} \cdot \abs{Q_i \cap \{k(y_i,z) +1 > C\lambda \}}   \nonumber \\
&\lesssim (T- \delta_0)  \sum_{l> C\lambda - C'}  \abs{\Omega_{l}}   \nonumber  \\
&\lesssim (T- \delta_0)  \sum_{l> C\lambda - C'} 2^{-\nu l}  \abs{\Omega} \nonumber \\
&\lesssim  \abs{\Omega \times ( \delta_0 , T)}  e^{-B\lambda}.
\end{align}

For the term $II_{ijk}$, we note that since $\frac{1}{5}Q_i$ are pairwise disjoint 
\begin{align}
\label{equ SS john3}
\sum_{i,j,k} II_{ijk} &=  \sum_{ j> C  \lambda  } \sum_{i,k}    \abs{U_i^{jk+} }    \lesssim    \sum_{ j> C   \lambda }   \abs{Z_j}  \nonumber \\
& \lesssim     \sum_{ j> C \lambda  } 2^{-j} \abs{\Omega \times (\delta_0, T)}  \nonumber \\
& \lesssim \abs{\Omega \times (\delta_0, T)}  e^{-B\lambda}.
\end{align}
Summing over $i,j$ and $k$ in \eqref{equ: SS final}, and plugging in the estimates \eqref{equ SS john1}, \eqref{equ SS john2} and \eqref{equ SS john3}, we get the claimed
\[\abs{  \Omega \times (\delta q^{p} , T) \cap \{ (u-a_{\mathfrak{R}})^{+} > \lambda\}} \leq A e^{-B\lambda} \abs{\Omega \times (\delta q^{p} , T)}.\]
\end{proof}

Note that if $\Omega = Q$ is a Euclidean cube and $T = 2l(Q)^{p}$, we have a parabolic rectangle. In this simple case, one may compute the bounds coming from the quasihyperbolic boundary condition explicitly (compare to \cite{Maasalo2008}). Replacing the quasihyperbolic geodesics by straight lines, and repeating the previous proof, it will be clear that the dependence on the data of $\Omega$ will become a dimensional constant. Thus we get a slightly stronger statement for these domains.

%the quasihyperbolic geodesic with maximal length is $q \eqsim l(Q)$. Choosing $T = 2 l(Q)^{p}$, we may consider the previous proof for parabolic rectangles. In this case, the 

%The whole construction is identical for all of them, so we may rephrase the statement in a slightly stronger form in this special case. Indeed, constants appearing in the global John-Nirenberg inequality are uniform for all parabolic rectangles. Moreover, in this case it would also be possible to carry out the argument directly without using the quasihyperbolic boundary condition. 

%Moreover the quasihyperbolic boundary condition would be satisfied with a dimensional constant and finally the last dependence on the domain coming from \eqref{eq q-dependence} would also be a numerical constant. Thus, we get a slightly stronger statement for parabolic rectangles.

\begin{corollary}
\label{cor: john-rectangle}
Let $R$ be a parabolic rectangle centered at $(x,\tau)$. If $u$ is in $ \PBMO^{\sigma} (R)$, then for every $\delta  > 0$ there is $c \in \mathbb{R}$ and constants $A$ and $B$ depending on $\delta$, $\PBMO^{\sigma}$-norm of $u$, $\sigma$, $p$ and $n$ such that
\[\abs{R_\delta^{+} \cap \{ (u-c)^{+} > \lambda\}} \leq A e^{-B\lambda} \abs{R}.\]
Here $R_\delta^{+} = Q \times (\tau - ( 1 - \delta) L^{p}, \tau + L^{p})$.
\end{corollary}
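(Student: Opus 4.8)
The plan is to obtain this by running the proof of Theorem~\ref{thm: john-nirenberg} for the domain $\Omega = Q$, the point being that for a Euclidean cube the ``data of $\Omega$'' collapses to a dimensional constant. First I would normalize: by a parabolic dilation together with a time translation we may assume $L = 1$ and $R = Q \times (0,2)$, so that $T = 2$ and $R_\delta^{+} = Q \times (\delta, 2)$; this is legitimate because the $\PBMO^{\sigma}$-norm of $u$ and the quotient $\abs{R_\delta^{+} \cap \{(u-c)^{+} > \lambda\}} / \abs{R}$ are invariant under the parabolic dilation carrying $R$ to a unit rectangle. A cube is convex, so straight segments may replace quasihyperbolic geodesics throughout; taking $z$ to be the center of $Q$, every such segment has length at most $\tfrac12\sqrt n$, and an elementary estimate of the integral $\int_{[z,y]} d(w,Q^{c})^{-1}\,{\rm d}s(w)$ shows $k(z,y) \lesssim_n \log\frac{1}{d(y,Q^{c})} + 1$; in particular $Q$ satisfies a quasihyperbolic boundary condition with dimensional data.

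Next I would re-run the proof of Theorem~\ref{thm: john-nirenberg} with these substitutions, checking that every place where the geometry of $\Omega$ entered now contributes only a dimensional constant: the maximal geodesic length $q$, computed with straight segments, is $\lesssim_n 1$; the shell estimate $\abs{\Omega_k} = \abs{Q \cap \{d(x,Q^{c}) < 2^{-k}\}} \lesssim_n 2^{-k}\abs{Q}$ holds for a cube with $\nu = 1$; the bound $k(y_i,z) \lesssim_n 1 + k$ for $Q_i \in \mathcal{W}_{\alpha}^{k}$ follows from the displayed estimate above; and Lemma~\ref{lemma: chain}, run with straight segments, keeps properties (ii)--(iv), the only change being that the term $k(x,z)$ in (iv) is now controlled by $\log\frac{1}{d(x,Q^{c})} + O_n(1)$. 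The argument of Theorem~\ref{thm: john-nirenberg} then goes through verbatim and yields, for each $\delta' > 0$, a constant $c$ and constants $A, B$ depending only on $\delta'$, $n$, $p$, $\sigma$ and $\norm{u}_{\PBMO^{\sigma}}$, such that
\[
\abs{Q \times (\delta' q^{p}, 2) \cap \{(u-c)^{+} > \lambda\}} \le A\, e^{-B\lambda}\, \abs{Q \times (\delta' q^{p}, 2)}.
\]

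Finally, given the target $\delta > 0$, fix the dimensional constant $c_n$ with $q \le c_n$ and apply the last display with $\delta' := \delta\, c_n^{-p}$, so that $\delta' q^{p} \le \delta$. Then $Q \times (\delta, 2) \subseteq Q \times (\delta' q^{p}, 2) \subseteq R$, whence
\[
\abs{R_\delta^{+} \cap \{(u-c)^{+} > \lambda\}} \le \abs{Q \times (\delta' q^{p}, 2) \cap \{(u-c)^{+} > \lambda\}} \le A\, e^{-B\lambda}\, \abs{Q \times (\delta' q^{p}, 2)} \le A\, e^{-B\lambda}\, \abs{R},
\]
which is the assertion once the normalizations are undone (the constant $c$ is unaffected by translation and dilation). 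I expect the only real work to be the second step: one must verify that the proof of Theorem~\ref{thm: john-nirenberg} uses the geometry of $\Omega$ \emph{solely} through the shell decay of $\Omega_k$ and the logarithmic control of $k(\cdot,z)$, and that both are effective, with explicit dimensional constants, for a cube equipped with straight-segment chains. This is bookkeeping rather than a new idea, but it is precisely where the improvement over the general theorem lies.
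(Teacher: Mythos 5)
Your proposal is correct and follows exactly the route the paper indicates: specialize Theorem~\ref{thm: john-nirenberg} to $\Omega = Q$, replace quasihyperbolic geodesics by straight segments, and observe that the maximal geodesic length, the shell measure decay $\abs{\Omega_k}\lesssim_n 2^{-k}\abs{Q}$, and the logarithmic bound on $k(\cdot,z)$ all hold with dimensional constants, so the conclusion of the theorem becomes independent of the ``data of $\Omega$''. Your additional bookkeeping (the parabolic-dilation normalization and the conversion from $\delta' q^p$ to $\delta$) is precisely the sort of elementary verification the paper leaves to the reader.
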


\section{Consequences of the global inequality}
In the classical context, the global John-Nirenberg inequality can be regarded as the strongest local-to-global result of $\BMO$. By this we mean that most other results can be deduced directly from it. In this section we state and prove parabolic analogues of the exponential integrability of functions in $\BMO$ (see \cite{SS1991} and \cite{Hurri1993}) and the equivalence of local and global norms (see \cite{RR1975}).

\begin{theorem}
\label{thm: ekspint}
Let $\Omega \subset \mathbb{R}^{n}$ satisfy the quasihyperbolic boundary condition \ref{def: quasihyperbolic}. If $u \in \PBMO_{\sigma} ( \Omega \times (0,T))$ with $\sigma \geq 1$, then for every $\delta > 0$ there is $\gamma  >0 $ and $c \in \mathbb{R}$ such that
\[\int_{\Omega \times (\delta, T)} e^{\gamma (u-c)^{+}} \dmu < \infty. \]
\end{theorem}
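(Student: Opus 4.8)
The plan is to derive this statement as a direct corollary of the global John--Nirenberg inequality, Theorem~\ref{thm: john-nirenberg}. The structure is the classical one used to pass from a John--Nirenberg bound to exponential integrability: write the integral of $e^{\gamma(u-c)^{+}}$ as an integral of its distribution function and split it at the threshold where the John--Nirenberg tail estimate becomes useful.

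First I would fix $\delta > 0$. Since $\delta q^{p}$ is just a positive number (the geometric quantity $q$ is finite because $\Omega$ is bounded, being a quasihyperbolic boundary condition domain), Theorem~\ref{thm: john-nirenberg} applied with the parameter $\delta' := \delta/q^{p}$ produces a constant $c \in \mathbb{R}$ and constants $A, B > 0$ such that
\[
\abs{\Omega \times (\delta, T) \cap \{(u-c)^{+} > \lambda\}} \leq A e^{-B\lambda} \abs{\Omega \times (\delta, T)}
\]
for all $\lambda > 0$. (One should note the cosmetic mismatch between $\PBMO^{\sigma}$ in Theorem~\ref{thm: john-nirenberg} and $\PBMO_{\sigma}$ in the present statement; these denote the same space, so nothing changes.) Now choose any $\gamma$ with $0 < \gamma < B$, write $E = \Omega \times (\delta, T)$, and compute using the layer-cake formula:
\[
\int_{E} e^{\gamma (u-c)^{+}} \dmu
= \abs{E} + \gamma \int_{0}^{\infty} e^{\gamma \lambda}\, \abs{E \cap \{(u-c)^{+} > \lambda\}} \dla.
\]
Bounding the distribution function by $\min\{\abs{E}, A e^{-B\lambda}\abs{E}\} \leq A e^{-B\lambda}\abs{E}$ (absorbing the harmless constant), the integral $\int_{0}^{\infty} e^{\gamma\lambda} e^{-B\lambda} \dla = (B-\gamma)^{-1}$ converges, so the whole expression is bounded by $C(A, B, \gamma)\abs{E}$, which is finite since $\abs{\Omega \times (\delta, T)} < \infty$. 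This gives the claim.

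There is essentially no obstacle here, since all the substance is already packed into Theorem~\ref{thm: john-nirenberg}; the only points requiring a word of care are (a) matching the normalization of the cutoff level ($\delta$ versus $\delta q^{p}$), which is immediate since $q$ is a fixed finite constant depending only on $\Omega$, and (b) ensuring $\gamma$ is chosen strictly below $B$ so that the exponential integral converges, with $B$ depending on $\delta$, $\sigma$, $p$, $n$, $\norm{u}_{\PBMO^{\sigma}}$ and the data of $\Omega$ exactly as in Theorem~\ref{thm: john-nirenberg}. The finiteness of $\abs{\Omega \times (\delta, T)}$ follows from boundedness of $\Omega$ (a consequence of the quasihyperbolic boundary condition) together with $T < \infty$; if one wants to allow $T = \infty$ one must instead observe that the measure estimate already forces integrability over the infinite cylinder against the exponential weight, again using $\gamma < B$.
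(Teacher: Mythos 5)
Your proof is correct and follows the same layer-cake argument that the paper uses: apply the global John--Nirenberg inequality, Theorem~\ref{thm: john-nirenberg}, to obtain an exponential tail bound on $\Omega \times (\delta, T)$, then choose $\gamma < B$ so that the resulting exponential integral converges. Your extra bookkeeping on the reparametrization $\delta \mapsto \delta/q^{p}$ is a valid point that the paper leaves implicit.
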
 
\begin{proof}
We may write
\begin{align*}
\int_{\Omega \times (\delta, T)}& e^{\gamma (u-c)^{+}} \dmu = \int_0^{\infty} \abs{ \{\Omega \times (\delta , T): e^{\gamma (u-c)^{+}} > \nu \}} \, {\rm d} \nu \\
&= \abs{\Omega \times (\delta, T)} +   \int_1^{\infty} \abs{ \{\Omega \times (\delta , T): e^{\gamma (u-c)^{+}} > \nu \}} \, {\rm d} \nu,
\end{align*}
so it suffices to estimate the second term. By theorem \ref{thm: john-nirenberg} we have
\[\abs{ \{\Omega \times (\delta , T): (u-c)^{+} > \lambda\}} \leq A e^{-B\lambda} \abs{\Omega \times (\delta , T)}.\] 
Using this, we get
\begin{align*}
\int_1^{\infty} &\abs{ \{\Omega \times (\delta , T): e^{\gamma (u-c)^{+}} > \nu \}} \, {\rm d} \nu \\
& = \int_{0}^{\infty} e^{\lambda} \abs{ \{\Omega \times (\delta , T):  (u-c)^{+}  > \lambda / \gamma \}} \dla \\
& \leq A \abs{\Omega \times (\delta , T)} \int_{0}^{\infty} e^{\lambda(1-B/\gamma)} \dla.
\end{align*}
Taking $\gamma$ small enough, we see that this integral is finite.
\end{proof}

Of course, we have a corresponding result for the negative part of the function. Here the gap between the domain of integration and the temporal boundary will be at the positive end.
\begin{corollary} 
\label{cor: ekspint}
Let $\Omega \subset \mathbb{R}^{n}$ satisfy the quasihyperbolic boundary condition \ref{def: quasihyperbolic}. If $u \in \PBMO_{\sigma} ( \Omega \times (0,T))$ with $\sigma \geq 1$, then for every $\delta > 0$ there is $\gamma  >0 $ and $c \in \mathbb{R}$ such that
\[\int_{\Omega \times (0, T- \delta  )} e^{\gamma (u-c)^{-}} \dmu < \infty. \]
\end{corollary}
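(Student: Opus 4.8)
The plan is to derive this from Theorem \ref{thm: ekspint} by a time-reversal argument. The key observation is that the parabolic $\BMO$ condition is not symmetric in time---the time lag points in one direction---but reflecting the temporal variable interchanges the roles of $S^{+}$ and $S^{-}$, and also interchanges the positive and negative parts of $u-a_R$ after a sign change. Concretely, I would set $\tilde{u}(x,t) = -u(x, T-t)$ on $\Omega \times (0,T)$. Under the map $(x,t) \mapsto (x, T-t)$, a parabolic rectangle $R$ centered at $(y,\tau)$ maps to a parabolic rectangle $\tilde{R}$ centered at $(y, T-\tau)$ with the same spatial sidelength, and its upper half $R^{+}$ maps onto the lower half $\tilde{R}^{-}$, while $S^{+}$ maps onto $S^{-}$ and vice versa; the scaled rectangle $\sigma R$ maps onto $\sigma \tilde{R}$, so admissibility in the $\PBMO^{\sigma}$ condition is preserved. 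Choosing $a_{\tilde{R}} = -a_R$, one checks that $(\tilde{u} - a_{\tilde{R}})^{+}$ on $\tilde{S}^{+}$ equals $(a_R - u)^{+}$ transported to $S^{-}$, and $(a_{\tilde{R}} - \tilde{u})^{+}$ on $\tilde{S}^{-}$ equals $(u - a_R)^{+}$ transported to $S^{+}$. Hence the two oscillation integrals in \eqref{def: bmo condition} for $\tilde{u}$ are exactly those for $u$ with the two summands swapped, so $\norm{\tilde{u}}_{\PBMO^{\sigma}} = \norm{u}_{\PBMO^{\sigma}}$ and $\tilde{u} \in \PBMO^{\sigma}(\Omega \times (0,T))$.

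Once this symmetry is in place, the rest is immediate. I would apply Theorem \ref{thm: ekspint} to $\tilde{u}$: for every $\delta > 0$ there exist $\gamma > 0$ and a constant $\tilde{c} \in \mathbb{R}$ with
\[\int_{\Omega \times (\delta, T)} e^{\gamma(\tilde{u} - \tilde{c})^{+}} \dmu < \infty.\]
Now change variables $t \mapsto T - t$ in this integral. The domain $\Omega \times (\delta, T)$ maps onto $\Omega \times (0, T - \delta)$, the Jacobian is $1$, and $(\tilde{u}(x, T-t) - \tilde{c})^{+} = (-u(x,t) - \tilde{c})^{+} = (u(x,t) - (-\tilde{c}))^{-}$. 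Setting $c = -\tilde{c}$, this gives exactly
\[\int_{\Omega \times (0, T-\delta)} e^{\gamma(u - c)^{-}} \dmu < \infty,\]
which is the assertion.

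The only point requiring any care is the bookkeeping in the first paragraph: verifying that the temporal reflection genuinely maps the geometric objects $R^{\pm}$, $S^{\pm}$, $U^{\pm}$ and the scalings $\lambda R$, $\lambda S^{+}$ of Definition \ref{def: parabolic rectangle} onto their counterparts with the opposite sign, so that a parabolic rectangle is carried to a parabolic rectangle of the same sidelength and admissibility is preserved. This is a routine check from the definitions---the reflection $t \mapsto -t$ is an isometry for the metric \eqref{eq:parametric} up to the obvious swap of upper and lower halves---and it is the reason the statement and its proof are labelled a corollary rather than a theorem. Note also that the shift of the gap to the future end of the interval, as remarked before the statement, is precisely the manifestation of this reflection: good behaviour near the initial time for $u^{-}$ corresponds to good behaviour near the final time for $\tilde{u}^{+}$.
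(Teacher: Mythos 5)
Your proof is correct and reaches the same conclusion, but by a genuinely cleaner route than the paper's. The paper's own argument goes back into the machinery: it observes that the chain construction of Lemma~\ref{lemma: chain} could have been run with the opposite temporal orientation (so that $U_i^{+}\cap U_{i+1}^{-}$ rather than $U_i^{-}\cap U_{i+1}^{+}$ is large), and then asserts that repeating the entire proof of Theorem~\ref{thm: john-nirenberg} with that orientation yields the distributional estimate
\[
\abs{\{\Omega\times(0,T-\delta): (u-c)^{-}>\lambda\}}\le A e^{-B\lambda}\abs{\Omega\times(\delta,T)},
\]
after which the exponential integrability follows as in Theorem~\ref{thm: ekspint}. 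You instead avoid re-opening the chain argument entirely: you identify the explicit time-reversal involution $\tilde u(x,t)=-u(x,T-t)$, check (correctly) that it maps parabolic rectangles to parabolic rectangles of the same sidelength, carries $S^{\pm}$ to $\tilde S^{\mp}$, preserves $\sigma R\subset\Omega\times(0,T)$, and with $a_{\tilde R}=-a_R$ swaps the two oscillation integrals in \eqref{def: bmo condition}, so $\tilde u\in\PBMO^{\sigma}$ with the same norm. Applying Theorem~\ref{thm: ekspint} to $\tilde u$ and changing variables $t\mapsto T-t$ then gives the corollary directly. Both proofs hinge on the same time-reversal symmetry, but your version implements it at the level of the function space rather than the chaining construction, which is more modular and makes the symmetry fully explicit; it also gives the dependence of $\gamma$ and $c$ on the data for free, since you invoke Theorem~\ref{thm: ekspint} as a black box rather than its proof.
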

\begin{proof}
From the proof of Theorem \ref{thm: ekspint} it is clear that once we have 
\begin{equation}
\label{eq: ekspint proof}
\abs{ \{\Omega \times (0 , T-\delta): (u-c)^{-} > \lambda\}} \leq A e^{-B\lambda} \abs{\Omega \times (\delta , T)},
\end{equation}
the claim will follow. In the proof of Theorem \ref{thm: john-nirenberg} we used Lemma \ref{lemma: chain}. It gave chains $\{R_i\}_i$ where $U_{i}^{-} \cap U^{+}_{i+1}$ had large measure. The same construction could have been done to the reversed direction, that is, so that $U_{i}^{+} \cap U^{-}_{i+1}$ would have been a large set. Repeating the proof of Theorem \ref{thm: john-nirenberg} with this orientation, we get \eqref{eq: ekspint proof}, and we are done.
\end{proof}

The next result, originally due to Reimann and Rychener \cite{RR1975} (see also Staples \cite{Staples1989}), tells that even if we originally assume the $\PBMO^{\sigma}(\Omega)$ condition with $\sigma > 1$ we actually have the condition with $\sigma = 1$. In other words, even if our original assumption is absolutely local, we still have complete information about the behaviour of a function up to the boundary.

\begin{theorem}
\label{thm: reimann_rychener}
Let $\Omega \subset \mathbb{R}^{n+1}$ be an arbitrary domain and let $\sigma > 1$. Let $u \in \PBMO_{\sigma} ( \Omega )$. Then $u \in \PBMO  ( \Omega )$.
\end{theorem}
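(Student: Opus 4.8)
The plan is to reduce the global statement to one about a single parabolic rectangle, where the geometry is automatically good. It suffices to find a constant $C$, depending only on $\sigma$, $p$, $n$ and $\norm{u}_{\PBMO^\sigma}$, such that every parabolic rectangle $R=Q(x_R,L)\times(t-L^p,t+L^p)$ with $R\subset\Omega$ admits a constant $a_R$ with $\dashint_{S^+}(u-a_R)^+\dmu+\dashint_{S^-}(a_R-u)^+\dmu\le C$. When $\sigma R\subset\Omega$ there is nothing to prove — this is the hypothesis. So I would fix $R$ with $\sigma R\not\subset\Omega$, i.e.\ $R$ close to $\partial\Omega$, where the $\PBMO^\sigma$ condition says nothing at scale $L$ directly. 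The first point is that $u|_R\in\PBMO^\sigma(R)$ with norm at most $\norm{u}_{\PBMO^\sigma(\Omega)}$, because $\sigma R'\subset R$ forces $\sigma R'\subset\Omega$; and $R$, being a cube times an interval, is as nice a cylinder as one can ask for — its spatial factor $Q$ satisfies the quasihyperbolic boundary condition with purely dimensional constants. So I can run the chaining machinery of Sections~3--4 \emph{inside $R$} rather than inside the possibly wild $\Omega$, paying only constants depending on $n$, $p$, $\sigma$.

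Next I would fix the constant. Let $R_0$ be the parabolic rectangle concentric with $R$ of spatial sidelength $L/(2\sigma)$, so that $\sigma R_0\subset R\subset\Omega$; the hypothesis then supplies a constant $a_{R_0}$ bounding the parabolic oscillation of $u$ on the quarters of $R_0$, and I set $a_R:=a_{R_0}$. The essential feature of this choice is that $R_0$ sits in the \emph{temporal centre} of $R$: since $(1/2)^p<1/2$ for $p>1$, the block $S^+$ lies entirely above $R_0$ in time and $S^-$ entirely below it. Hence $S^+$ can be joined to $R_0$ by chains of parabolic rectangles flowing downward in time — the chains of Lemma~\ref{lemma: chain}, together with vertical stretches from Remark~\ref{re: vertivcal} — and $S^-$ can be joined to $R_0$ by chains flowing upward, namely the reversed chains used in the proof of Corollary~\ref{cor: ekspint}. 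Because we work inside the nice domain $R$, all these chains have length controlled by $n$, $p$, $\sigma$ and the position of the fragment in $Q$, never by the geometry of $\Omega$.

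With $R_0$ and these chains available, I would establish the two one-sided bounds $\dashint_{S^+}(u-a_{R_0})^+\dmu\le C$ and $\dashint_{S^-}(a_{R_0}-u)^+\dmu\le C$ by rerunning the proof of Theorem~\ref{thm: john-nirenberg} — and, for the second, its time-reversed form as in the proof of Corollary~\ref{cor: ekspint} — with $\Omega$ replaced by the cube $Q$, with the fixed rectangle $R_0$ playing the role of the ``final rectangle'' $\mathfrak R$ produced there, and covering only the relevant block ($S^+$, resp.\ $S^-$), which lies on the correct side of $R_0$ for the chains to exist. Telescoping the constants along each chain through good points supplied by the John--Nirenberg inequalities \eqref{JN+}--\eqref{JN-}, and carrying out the covering-and-summation bookkeeping of Section~4, gives $\abs{S^\pm\cap\{\,\cdot\,>\lambda\}}\le Ae^{-B\lambda}\abs{R}$ with $A$, $B$ uniform; integrating and using $\abs{S^\pm}=\tfrac14\abs{R}$ yields the two bounds, and adding them finishes the reduction, so $u\in\PBMO(\Omega)$.

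The step I expect to be the main obstacle is coupling the two halves of the parabolic oscillation through a \emph{single} constant. Applying Corollary~\ref{cor: john-rectangle} and its time-reversed version to $R$ itself would produce a constant tuned to $S^+$ — morally an ``early'' value of $u$ — and a different one tuned to $S^-$ — a ``late'' value — and since parabolic $\BMO$ controls only how much $u$ can increase forward in time, not how much it can decrease, these need not be comparable, so neither works for both blocks. Placing $R_0$ in the temporal interior of $R$ is exactly what repairs this: the one value $a_{R_0}$ dominates $u$ on the later block $S^+$ from above and stays below $u$ on the earlier block $S^-$, simultaneously. The remaining work is then to confirm that the two transfers — downward from $R_0$ to $S^+$ and upward from $R_0$ to $S^-$ — can be realized by chains of controlled length inside the fixed, cube-based cylinder $R$; this is where the reduction to a nice domain earns its keep. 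A minor secondary point is that Theorem~\ref{thm: john-nirenberg} and its corollaries are stated for cylinders $\Omega'\times(0,T)$ and for the particular constant their construction outputs, so one must reinspect that construction to see it delivers the estimate on the block $S^\pm$ of $R$ with the \emph{prescribed} constant $a_{R_0}$ — but this is routine and the bookkeeping is unchanged.
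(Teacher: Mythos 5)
Your proposal is correct and follows essentially the same route as the paper's own proof: apply Corollary \ref{cor: john-rectangle} and its time-reversed version inside a given $R \subset \Omega$ (noting $u|_R \in \PBMO^\sigma(R)$ and that the chaining machinery of Sections~3--4 runs in the cube-based cylinder $R$ with purely dimensional constants), and couple the two one-sided John--Nirenberg estimates through a single constant by placing the terminal rectangle at the temporal centre of $R$. The paper is terse about this last coupling step (``choosing $\delta = 5/4$ and making the final rectangles $\mathfrak{R}$ \ldots\ coincide''), whereas your explicit construction of the concentric $R_0$ of sidelength $L/(2\sigma)$ and the observation that $S^{\pm}$ sit on opposite temporal sides of it because $(1/2\sigma)^p < 1/2$ spell out precisely why that coupling is available.
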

\begin{proof}
Take a parabolic rectangle $R \subset \Omega$. By Corollary \ref{cor: john-rectangle}, we have
\[\abs{R_\delta^{+} \cap \{ (u-c)^{+} > \lambda\}} \leq A e^{-B\lambda} \abs{R}.\]
Integrating this, we get
\begin{align*}
\int_{R_{\delta}}  (u-c)^{+} \dx &= \int_{0}^{\infty} \abs{R_\delta^{+} \cap \{ (u-c)^{+} > \lambda\}} \dla \\
& \leq \abs{R} \int_{0}^{\infty} e^{-B\lambda} \dla \lesssim \abs{R}.
\end{align*}
Reasoning as in the previous proof, we see that the corresponding inequality holds for $R_\delta^{-}$. Moreover, choosing $\delta = 5/4$ and making the final rectangles $\mathfrak{R}$ in the proof of \ref{thm: john-nirenberg} coincide, we ensure that the constants $c$ associated to plus and minus parts coincide. 
\end{proof}

\section{Integrability of supersolutions}
In this section we apply the results about $\PBMO$ to partial differential equations. More precicely we study equations of the form
\begin{equation}
\label{eq: equation}
\frac{\partial (u^{p-1})}{\partial t} = \dive  A(x,t,u,Du) , \quad 1<p< \infty
\end{equation}
where $A(x,t,u,Du)$ is a Caratheodory function (see \cite{DGV2012}) satisfying the growth conditions
\begin{align}
\label{eq:growth1}
A(x,t,u,Du) \cdot Du &\geq C_0 \abs{Du}^{p} \\
\label{eq:growth2}
\abs{A(x,t,u,Du)}  &\leq C_1 \abs{Du}^{p-1}.
\end{align}

We denote by $L^{p}(0,T;W^{1,p}(\Omega))$ the space of $p$-integrable functions on $(0,T)$ having their values in the Sobolev space $W^{1,p}(\Omega)$. More concretely, $u$ is in the parabolic space $ L^{p}(0,T;W^{1,p}(\Omega))$ if 
\[u(t,\cdot) \in W^{1,p}(\Omega) \ \textrm{for a.e.} \ t \in (0,T) \] and 
\[\int_{0}^{T} \norm{u(t,\cdot)}_{W^{1,p}(\Omega)}^{p} \dt < \infty. \]

\begin{definition}
A function $u \in L^{p}_{loc}(0,T;W_{loc}^{1,p}(\Omega))$ is a \textit{supersolution} to \eqref{eq: equation} if
\[\int_{0}^{T} \int_{\Omega} \left(A(x,t,u,Du) \cdot D\phi - u^{p-1} \frac{\partial \phi}{\partial t} \right) \dx \dt \geq 0 \]
for all non-negative $\phi \in C_c^{\infty}(\Omega \times (0,T))$.
\end{definition} 
\noindent If the integral in the above definition vanishes for all $\phi \in C_c^{\infty}(\Omega \times (0,T))$, then $u$ is a weak solution. For our purposes, however, it suffices to consider supersolutions. 

Following \cite{KK2007} (see also \cite{Trudinger1968} and \cite{Moser1964}) one can show that a positive supersolution $f$ of \eqref{eq: equation} that is bounded away from zero has the negative of its logarithm in $\PBMO$. This will imply that $f$ is globally integrable to some small power $\epsilon > 0$,
which is a delicate fact since both the equation \eqref{eq: equation} and the definition of supersolution are very local assumptions, that is, they do not say anything about the behavior of $f$ near $\partial\Omega \times (0,T)$. But still, in terms of integrability, $f$ behaves at worst as a power function.

We will use Lemma 6.1 of \cite{KK2007}, which is stated for positive supersolutions of doubly nonlinear equation, that is \eqref{eq: equation} with $A(x,t,Du,u) = \abs{Du}^{p-2}Du$, but as the authors of \cite{KK2007} mention, the assumptions that the proof actually requires are the conditions \eqref{eq:growth1} and \eqref{eq:growth2}.

\begin{lemma}[Kinnunen-Kuusi \cite{KK2007}]
\label{lemma:KinnunenKuusi}
Let $f > \gamma > 0$ be a supersolution to \eqref{eq: equation} on $\sigma R $ where $\sigma > 1$ and $R$ is a parabolic rectangle. Then there are constants $C$ and $C'$ depending only on $C_0$, $C_1$, $\sigma$, $p$ and $n$ such that
\begin{align*}
\abs{\{(x,t) \in  R^{-} : \log f > \lambda + \beta + C' \} } &\leq \frac{C}{\lambda^{p-1}} \abs{ R^{-}} \quad \textrm{and} \\
\abs{\{(x,t) \in  R^{+} : \log f < -\lambda + \beta - C' \} } &\leq \frac{C}{\lambda^{p-1}} \abs{ R^{+}}   
\end{align*}
where $\beta$ depends on $R$ and $f$, and $\lambda >0$ is arbitrary.
\end{lemma}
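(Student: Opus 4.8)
The plan is to derive Lemma \ref{lemma:KinnunenKuusi} by testing the weak supersolution inequality with a logarithmic test function, in the spirit of Moser's iteration as carried out in \cite{KK2007}, \cite{Trudinger1968} and \cite{Moser1964}. First I would fix a parabolic rectangle $R$ with $\sigma R \subset$ the domain of $f$, and work with $v = \log f$, which is well-defined and locally bounded since $f > \gamma > 0$. The starting point is to insert into the definition of supersolution a test function of the form $\phi = f^{1-p}\psi^{p}$, where $\psi$ is a smooth cutoff adapted to $\sigma R$ (equal to $1$ on $R$, supported in $\sigma R$, with $|D\psi| \lesssim (\sigma-1)^{-1}L^{-1}$ and $|\partial_t\psi| \lesssim (\sigma-1)^{-p}L^{-p}$ by the parabolic scaling). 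One must first justify this choice by a standard approximation argument (mollify $f$ in time, or use a Steklov average, and pass to the limit), since $\phi$ is not smooth; this is routine but needs the lower bound $f > \gamma$ to control $f^{1-p}$.

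The next step is the computation. The divergence term, after using the growth conditions \eqref{eq:growth1} and \eqref{eq:growth2} and Young's inequality to absorb the cross terms, yields a good term $\gtrsim \int |Dv|^{p}\psi^{p}$ minus an error $\lesssim \int |D\psi|^{p}$. The parabolic term $-\int u^{p-1}\partial_t\phi$ is the delicate one: writing $u^{p-1}\partial_t(f^{1-p}\psi^{p}) = \partial_t(\psi^{p}) = p\psi^{p-1}\partial_t\psi$ pointwise where $f$ is smooth, one sees the $\partial_t f$ contribution miraculously cancels, leaving only $\lesssim \int |\partial_t\psi|\,\psi^{p-1}$, which is controlled by the cutoff bounds and the measure of $\sigma R$. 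Combining, we obtain a Caccioppoli-type estimate
\[
\int_{R} |Dv|^{p} \dmu \lesssim |\sigma R|\left( (\sigma-1)^{-p} L^{-p} + (\sigma-1)^{-p} L^{-p}\right) \lesssim \frac{|R|}{L^{p}},
\]
with constant depending on $C_0$, $C_1$, $\sigma$, $p$, $n$. By the parabolic Poincaré inequality on $R^{\pm}$ (both have measure $\eqsim |R|$ and diameter $\eqsim L$ in the metric \eqref{eq:parametric}), this bounds $\fint_{R^{\pm}}|v - v_{R^{\pm}}|^{p}\dmu$ by a dimensional constant; a standard telescoping over $R$ then shows $\fint_{R^{+}}|v - \beta|^{p}$ and $\fint_{R^{-}}|v-\beta|^{p}$ are both $\lesssim 1$ for a common constant $\beta$ (built from an average over $R$), but with the crucial time-lag feature: the estimate on $R^{+}$ measures deviation upward relative to $\beta$ obtained on the lower part, and vice versa, which is exactly what produces the two asymmetric statements. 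Finally, Chebyshev's inequality converts the $L^{p}$-bound on $v-\beta$ (really on $(v-\beta)^{+}$ over $R^{-}$ and $(\beta - v)^{+}$ over $R^{+}$) into the claimed $\lambda^{1-p}$ decay of the distribution functions, absorbing the telescoping constant into $C'$.

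The main obstacle I anticipate is the careful bookkeeping in the parabolic term: making the cancellation of $\partial_t f$ rigorous requires the time-mollification/Steklov-average machinery, and one must track how the \emph{time lag} enters when passing from the Caccioppoli estimate to the two separate inequalities on $R^{+}$ and $R^{-}$. This is precisely where the structure of the parabolic equation forces the asymmetry between upper and lower bounds, and it is the reason the constant $\beta$ cannot be taken as a genuine mean value but only as an auxiliary constant depending on $R$ and $f$. Since the statement is attributed to \cite{KK2007}, I would present this as a sketch and refer to Lemma 6.1 there for the full details, noting only that the proof uses nothing beyond \eqref{eq:growth1}, \eqref{eq:growth2} and the parabolic scaling.
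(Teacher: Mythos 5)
The paper does not prove this lemma; it simply cites Lemma 6.1 of \cite{KK2007} and notes that the proof there uses only the structure conditions \eqref{eq:growth1}--\eqref{eq:growth2}. So there is no internal proof to compare against, and what you have supplied is a reconstruction. Unfortunately the reconstruction has a fatal computational error in exactly the place you call delicate.

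You assert that, with $u=f$ and $\phi=f^{1-p}\psi^{p}$, one has $u^{p-1}\partial_t\phi=\partial_t(\psi^{p})$, so that the $\partial_t f$ contribution ``miraculously cancels.'' It does not. The factor $f^{p-1}$ may not be moved inside the time derivative; the correct identity (at smooth level) is
\[
f^{p-1}\,\partial_t\bigl(f^{1-p}\psi^{p}\bigr)
=\partial_t(\psi^{p})-(p-1)\,\psi^{p}\,\partial_t\bigl(\log f\bigr),
\]
so the parabolic term produces, in addition to the harmless $p\psi^{p-1}\partial_t\psi$ piece, the term $(p-1)\int\psi^{p}\,\partial_t v$ with $v=\log f$. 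This surviving term is not an error to be absorbed; it is the heart of the lemma. Integrated over a time slab $(t_1,t_2)$ with $\psi$ independent of $t$, it becomes $(p-1)\bigl[\int_Q v\,\psi^{p}\,dx\bigr]_{t_1}^{t_2}$, giving a one-sided differential inequality for the weighted spatial average $w(t)=\int_Q v(\cdot,t)\psi^{p}\,dx$. Together with the spatial Poincar\'e inequality applied slice-by-slice, this monotonicity of $w$ is what forces the asymmetric conclusion: $\log f$ can be large only on the past part $R^{-}$ and small only on the future part $R^{+}$, with a constant $\beta$ coming from the value of $w$ at an intermediate time. By dropping this term you lose the entire mechanism and have no way to derive the $R^{\pm}$ asymmetry; the subsequent ``telescoping over $R$'' to a common $\beta$ is then just a gesture at the missing argument.

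A secondary sign that the sketch has departed from the real proof is the exponent. The claimed route (Caccioppoli for $|Dv|^{p}$, then parabolic Poincar\'e, then Chebyshev on an $L^{p}$ oscillation bound) would give decay $\lambda^{-p}$, not the stated $\lambda^{1-p}$. The weaker $\lambda^{1-p}$ rate is exactly what comes out of the Moser/Kinnunen--Kuusi argument, where the gradient bound is combined with an $L^{1}$-in-time control on $v-w(t)$; the loss of one power is structural and cannot be recovered by Chebyshev alone. If you wish to present a sketch, the correct skeleton is: (a) Caccioppoli estimate $\int|Dv|^{p}\psi^{p}\lesssim|R|/L^{p}$ as you wrote; (b) the surviving time term gives near-monotonicity of $w(t)=\int v\psi^{p}$; (c) weighted Poincar\'e controls $\int_Q |v(\cdot,t)-w(t)|\psi^{p}$ by $\int_Q |Dv(\cdot,t)|\psi^{p}$; (d) a covering/level-set argument combining (a)--(c) delivers the weak-$L^{p-1}$ bounds on $R^{\pm}$, with $\beta$ essentially $w$ at the temporal center of $R$.
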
  

A short calculation shows that this estimate leads to the parabolic $\BMO$-space similar to the one first defined in Moser \cite{Moser1964}. The difference here is again the $p$-scaling of time variable. However, having the John-Nirenberg type inequalitites of \cite{Aimar1988} and Theorem \ref{thm: reimann_rychener}, we will be able to prove that  $- \log f \in \PBMO$. 

\begin{lemma}
\label{lemma: logbmo}
Let $f > \gamma > 0$ be a supersolution to \eqref{eq: equation} on $\Omega \times (0,T)$ and $u = - \log f$. Then $u \in \PBMO$ with norm depending only on $C_0$, $C_1$, $p$ and $n$.
\end{lemma}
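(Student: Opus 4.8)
The plan is to reduce the parabolic $\BMO$ membership of $u=-\log f$ to the local John--Nirenberg estimates of Lemma~\ref{lemma:KinnunenKuusi} together with Theorem~\ref{thm: reimann_rychener}. First I would fix an arbitrary parabolic rectangle $R$ with $\sigma R \subset \Omega\times(0,T)$ for some fixed $\sigma>1$, and apply Lemma~\ref{lemma:KinnunenKuusi} to $f$ on $\sigma R$. Writing $u=-\log f$, the two distributional estimates become
\[
\abs{\{(x,t)\in R^{-}: (u - a_R)^{+} > \lambda\}} \lesssim \lambda^{-(p-1)}\abs{R^{-}}, \qquad
\abs{\{(x,t)\in R^{+}: (a_R - u)^{+} > \lambda\}} \lesssim \lambda^{-(p-1)}\abs{R^{+}},
\]
where $a_R = -\beta + C'$ on the lower part and $a_R = \beta - C'$ on the upper part — so strictly speaking there are two constants, differing by $2C'$, a gap that is harmless and can be absorbed into the oscillation. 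Note the orientation: the super-level set of $u$ lives on $R^{-}$ and the sub-level set on $R^{+}$, which is exactly the time-lag configuration built into Definition~\ref{def: parabolic rectangle} once one recalls that $S^{\pm}\subset R^{\pm}$.

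Next I would integrate these tail bounds to pass from weak-type information to an $L^1$ oscillation bound. Since $p>1$, the function $\lambda\mapsto \min\{1,\lambda^{-(p-1)}\}$ is integrable on $(0,\infty)$ only if $p>2$; for $1<p\le 2$ the bare estimate $\lambda^{-(p-1)}$ is not integrable at infinity, so a direct integration of the distribution function does not immediately give $u\in L^1$. This is the point where one must invoke the self-improvement coming from \cite{Aimar1988}: the Kinnunen--Kuusi estimate says precisely that $u$ satisfies a John--Nirenberg inequality of \emph{polynomial} type in the sense of the $\BMO$ spaces with lag mappings (Definition~\ref{def: lag mappings} with $B=S$, and here the relevant $h$ is effectively a truncation argument), and Aimar's lemma upgrades any such polynomial-tail condition to the genuine exponential bounds \eqref{JN+}--\eqref{JN-}, which in turn give finiteness of $\dashint_{S^{+}}(u-a_R)^{+} + \dashint_{S^{-}}(a_R-u)^{+}$ with a bound depending only on $C_0,C_1,\sigma,p,n$. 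Thus $u\in\PBMO^{\sigma}(\Omega\times(0,T))$ with the stated dependence of the norm.

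Finally, since we have only obtained the condition with dilation factor $\sigma>1$ (because Lemma~\ref{lemma:KinnunenKuusi} requires $f$ to be a supersolution on the enlarged rectangle $\sigma R$), I would apply Theorem~\ref{thm: reimann_rychener} to conclude $u\in\PBMO(\Omega\times(0,T))$, i.e.\ the $\sigma=1$ condition, with norm still controlled by the same data. I expect the main obstacle to be the $1<p\le 2$ range, where the polynomial tail $\lambda^{-(p-1)}$ fails to be integrable by itself and one genuinely needs Aimar's John--Nirenberg machinery to close the loop — the estimate of Lemma~\ref{lemma:KinnunenKuusi} must be recognized as (a special case of) the hypothesis of that machinery rather than as a self-contained bound. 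A secondary, routine point is bookkeeping the two constants $-\beta+C'$ and $\beta-C'$ and checking that their discrepancy $2C'$, depending only on the allowed data, does not affect the $\PBMO$ norm; one simply replaces $a_R$ by either of them and enlarges the constant.
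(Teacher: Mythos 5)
Your overall strategy matches the paper's: pass from the Kinnunen--Kuusi distributional estimates to Aimar's John--Nirenberg machinery, and finish with Theorem~\ref{thm: reimann_rychener} to drop from $\PBMO^{\sigma}$ to $\PBMO$. You also correctly diagnose the essential difficulty, namely that for $1<p\le 2$ the bare tail bound $\lambda^{-(p-1)}$ is not integrable at infinity, so one cannot simply integrate the distribution function to obtain an $L^1$ oscillation bound.

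However, the proposal has a genuine gap precisely at the step it identifies as crucial. To feed Lemma~\ref{lemma:KinnunenKuusi} into Aimar's framework you must \emph{verify} the hypothesis of Definition~\ref{def: lag mappings}, i.e.\ an $L^{b}$-type oscillation bound with $h(r)=(r)_+^{b}$ for some $b\in(0,1]$. That is not a ``truncation argument.'' The device is to raise the oscillation to a small power $b$ chosen so that $b<p-1$; the paper takes $b=\min\{(p-1)/2,1\}$. Then
\[
\dashint_{R^{\pm}}(u-a_R)_{\pm}^{b}\,\dmu
\lesssim 1+ \int_{1}^{\infty}\lambda^{b-1}\cdot\lambda^{-(p-1)}\,\dla
= 1+\int_{1}^{\infty}\lambda^{b-p}\,\dla <\infty,
\]
uniformly in $R$, which is exactly the input Aimar's lemma requires. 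Without choosing such a $b$ and carrying out this integration, the argument does not close: for $1<p\le 2$ the $b=1$ integral diverges, and truncating the integrand only produces bounds that depend on the truncation level, not the uniform-in-$R$ constant needed for Definition~\ref{def: lag mappings}. A secondary, fixable issue is your orientation: with $u=-\log f$, the super-level estimate for $(u-a_R)^{+}$ comes from the second inequality of Lemma~\ref{lemma:KinnunenKuusi} and lives on $R^{+}$ (with $a_R^{+}=-\beta+C'$), while the $(a_R-u)^{+}$ estimate lives on $R^{-}$ (with $a_R^{-}=-\beta-C'$) --- the reverse of what you wrote. The $2C'$ discrepancy between the two constants is indeed harmless, as you say.
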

\begin{proof}
Let $R$ be a parabolic rectangle such that $ \sigma R \subset \Omega \times (0,T)$. Set
\[b = \min\{(p-1)/2,1\}.\] 
Then a straightforward integration gives
\begin{align*}
& \int_{  R^{+}} (u + \beta)_{+}^{b} \dmu = b\int_{0}^{\infty} \lambda^{b-1} \abs{\{(x,t) \in  R^{+} : u + \beta > \lambda \}} \dla \\
&= b\int_{0}^{\infty} \lambda^{b-1} \abs{\{(x,t) \in  R^{+} : -\log f + \beta  > (\lambda- C') + C' \}} \dla \\
&\leq  \abs{ R^{+}} (1+C')^{b} \\
&\hspace{1cm} + b\int_{1}^{\infty} (\lambda + C')^{b-1} \abs{\{(x,t) \in  R^{+} : -\log f + \beta  > \lambda + C' \}} \dla \\ 
&\leq   \abs{R^{+}} (1+C')^{b} + C b \abs{ R^{+}} \int_{1}^{\infty} (\lambda + C')^{b - p}  \dla  
\end{align*}
and 
\begin{align*}
&  \int_{ R^{-}}(u + \beta)_{-}^{b} \dmu = b\int_{0}^{\infty} \lambda^{b-1} \abs{\{(x,t) \in  R^{-} : -u  - \beta > \lambda \}} \dla \\
&= b\int_{0}^{\infty} \lambda^{b-1} \abs{\{(x,t) \in  R^{-} :  \log f - \beta  > (\lambda- C') + C' \}} \dla \\
&\leq   \abs{ R^{-}} (1+C')^{b} \\
&\hspace{1cm} + b\int_{1}^{\infty} (\lambda + C')^{b-1} \abs{\{(x,t) \in  R^{-} : \log f - \beta  > \lambda + C' \}} \dla \\ 
&\leq   \abs{ R^{-}} (1+C')^{b} + Cb \abs{ R^{-}} \int_{1}^{\infty} (\lambda + C')^{b-p}  \dla  
\end{align*}
so $u$ satisfies Definition \ref{def: lag mappings}:
\[\sup_{\sigma R \subset \Omega \times (0,T)} \left( \dashint_{  R^{+}} (u-a_R)_{+}^{b} \dmu +  \dashint_{  R^{-}} (a_R- u)_{+}^{b} \dmu \right) < \infty. \]
According to Remark \ref{remark: John-Nirenberg}, we get a John-Nirenberg lemma. Even if this differs from \eqref{JN+} and \eqref{JN-}, all the arguments of the previous sections are still valid up to change of some dimensional constants. Thus we may apply Theorem \ref{thm: reimann_rychener} to conclude that $u \in \PBMO( \Omega \times (0,T))$.
\end{proof}

Having established the fact $ u \in \PBMO(\Omega\times (0,T))$, global integrability of positive supersolutions follows easily.

\begin{theorem}
Let $f> \gamma > 0$ be a supersolution to \eqref{eq: equation} on $\Omega \times (0,T)$ where $\Omega \subset \mathbb{R}^{n}$ is a domain satisfying a quasihyperbolic boundary condition. Then for each $\delta > 0$ there is $\epsilon > 0$ depending only on $p$, $n$, $\delta$, $\Omega$, $C_0$ and $C_1$ such that
\begin{equation}
\label{eq: global integrability}
\int_{\Omega \times (0,T- \delta)} f ^{\epsilon} \dmu < \infty. 
\end{equation}
\end{theorem}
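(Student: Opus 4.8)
The strategy is to combine Lemma~\ref{lemma: logbmo} with the exponential integrability estimates of Section~5. By Lemma~\ref{lemma: logbmo}, the function $u = -\log f$ belongs to $\PBMO(\Omega \times (0,T))$, with norm controlled by $C_0$, $C_1$, $p$ and $n$ alone. Since $\Omega$ satisfies a quasihyperbolic boundary condition, Corollary~\ref{cor: ekspint} applies: for every $\delta > 0$ there exist $\gamma > 0$ and $c \in \mathbb{R}$ with
\[
\int_{\Omega \times (0, T-\delta)} e^{\gamma (u-c)^{-}} \dmu < \infty.
\]
(Here it is essential that we use the \emph{negative} part version, Corollary~\ref{cor: ekspint}, rather than Theorem~\ref{thm: ekspint}, because the statement to be proved has its temporal gap at the \emph{future} end $T-\delta$, which matches the geometry of the negative-part estimate.)

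The remaining step is purely algebraic: translate the exponential integrability of $(u-c)^{-}$ into an $L^{\epsilon}$ bound for $f$. Writing $u = -\log f$, we have $(u-c)^{-} = (c - u)^{+} = (c + \log f)^{+}$, so on the set where $\log f > -c$ one has $e^{\gamma(u-c)^{-}} = e^{\gamma c} f^{\gamma}$, while on the complementary set $f$ is bounded above by $e^{-c}$. Splitting the integral of $f^{\gamma}$ over $\Omega \times (0,T-\delta)$ according to whether $\log f \gtrless -c$, one gets
\[
\int_{\Omega \times (0,T-\delta)} f^{\gamma} \dmu \leq e^{-\gamma c}\int_{\Omega \times (0,T-\delta)} e^{\gamma(u-c)^{-}} \dmu + e^{-\gamma c}\abs{\Omega \times (0,T-\delta)} < \infty,
\]
using that $\abs{\Omega \times (0,T-\delta)} < \infty$ because $\Omega$ is bounded (a quasihyperbolic boundary condition is imposed only on bounded domains, per Definition~\ref{def: quasihyperbolic}). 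Taking $\epsilon = \gamma$ yields \eqref{eq: global integrability}, and the dependence of $\epsilon$ on $p$, $n$, $\delta$, $\Omega$, $C_0$, $C_1$ is inherited from the dependence of $\gamma$ in Corollary~\ref{cor: ekspint} together with the dependence of $\norm{u}_{\PBMO}$ on $C_0$, $C_1$, $p$, $n$ in Lemma~\ref{lemma: logbmo}.

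I do not expect a genuine obstacle here: the theorem is essentially a corollary of the machinery already built. The only point that requires a moment's care is bookkeeping of which temporal endpoint the gap falls on, and hence whether to invoke Theorem~\ref{thm: ekspint} or Corollary~\ref{cor: ekspint}; the sign flip $u = -\log f$ is exactly what converts the future-gap negative-part estimate into the desired statement. A secondary detail is confirming that the constants $c$ and $\gamma$ produced by Corollary~\ref{cor: ekspint} depend only on the advertised quantities, which follows by tracing them back through Theorem~\ref{thm: john-nirenberg}.
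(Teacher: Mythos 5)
Your proof is correct and follows essentially the same route as the paper: apply Lemma~\ref{lemma: logbmo} to put $u=-\log f$ in $\PBMO$, invoke Corollary~\ref{cor: ekspint} for exponential integrability of $(u-c)^{-}$ over $\Omega\times(0,T-\delta)$, and then translate $(u-c)^{-}=(c+\log f)^{+}$ into $f^{\epsilon}$ by splitting on the level set of $f$. Your write-up is slightly more explicit on the final algebraic step than the paper's, but the content and the chain of lemmas are the same, including the correct choice of the negative-part Corollary over Theorem~\ref{thm: ekspint} to match the temporal gap at $T-\delta$.
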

\begin{proof}
By Lemma \ref{lemma: logbmo} $-\log f \in \PBMO(\Omega\times (0,T))$, so by Corollary \ref{cor: ekspint} there are $c \in \mathbb{R}$ and $\epsilon > 0$ such that
\begin{align*}
\infty > \int_{\Omega \times (0,T- \delta)} e^{\epsilon(- \log f -c )^{-} } \dmu &\geq  \int_{\Omega \times (0,T- \delta)} e^{\epsilon((- \log f)^{-} - (c )^{-}) } \dmu \\
& = C \int _{\Omega \times (0,T- \delta) \cap \{ f > 1 \}} f^{\epsilon} \dmu  + C ,
\end{align*}
so the finiteness of the integral in \eqref{eq: global integrability} follows.
\end{proof}

The assumption $f > \gamma > 0$ coming from Lemma \ref{lemma:KinnunenKuusi} can be replaced by the assumption that $f > 0$ is lower semicontinuous. Indeed, for a lower semicontinuous $f > 0$ it actually holds that $f > \gamma_R > 0$ in all parabolic rectangles $R$. Since \ref{lemma:KinnunenKuusi} provides an estimate uniform in $\gamma$, we can actually apply it, and get that $- \log f \in \PBMO^{\sigma}$. Thus the previous theorem can also be stated in the following form.

\begin{theorem}
Let $f>   0$ be a lower semicontinuous  supersolution to \eqref{eq: equation} on $\Omega \times (0,T)$ where $\Omega \subset \mathbb{R}^{n}$ is a domain satisfying a quasihyperbolic boundary condition. Then for each $\delta > 0$ there is $\epsilon > 0$ depending only on $p$, $n$, $\delta$, $\Omega$, $C_0$ and $C_1$ such that
\begin{equation*}
\int_{\Omega \times (0,T- \delta)} f ^{\epsilon} \dmu < \infty. 
\end{equation*}
\end{theorem}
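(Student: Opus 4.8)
The plan is to reduce this statement to the previous theorem by observing that lower semicontinuity, together with positivity, forces $f$ to be bounded away from zero \emph{locally} in a way strong enough to invoke Lemma~\ref{lemma:KinnunenKuusi} with constants that do not degenerate. Concretely, first I would fix an arbitrary parabolic rectangle $R$ with $\sigma R \subset \Omega \times (0,T)$; since $\overline{\sigma R}$ is compact and contained in the open set $\Omega\times(0,T)$ (after shrinking $R$ slightly, or arguing on a compactly contained enlargement), and since $f>0$ is lower semicontinuous, $f$ attains a positive minimum $\gamma_R > 0$ on $\overline{\sigma R}$. In particular $f > \gamma_R > 0$ on $\sigma R$, so Lemma~\ref{lemma:KinnunenKuusi} applies on this rectangle.

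The crucial point — and the only real obstacle — is that the conclusion of Lemma~\ref{lemma:KinnunenKuusi} is \emph{uniform in $\gamma$}: the constants $C$, $C'$ depend only on $C_0$, $C_1$, $\sigma$, $p$, $n$, while the rectangle-dependent quantity $\beta$ is harmless because it is exactly the free constant $a_R$ in the $\PBMO$ condition. Therefore, even though the threshold $\gamma_R$ varies from rectangle to rectangle, the distributional inequalities for $\log f$ hold with one fixed pair of constants across all admissible $R$. Feeding these into the integration carried out in the proof of Lemma~\ref{lemma: logbmo} verbatim (with $b=\min\{(p-1)/2,1\}$), one obtains
\[
\sup_{\sigma R \subset \Omega\times(0,T)} \left( \dashint_{R^{+}} (u-a_R)_{+}^{b}\dmu + \dashint_{R^{-}}(a_R-u)_{+}^{b}\dmu\right) < \infty,
\]
where $u=-\log f$, i.e. $u$ satisfies Definition~\ref{def: lag mappings}, hence (via Remark~\ref{remark: John-Nirenberg} and Theorem~\ref{thm: reimann_rychener}) $u\in\PBMO(\Omega\times(0,T))$ with norm depending only on $C_0,C_1,p,n$. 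In other words, Lemma~\ref{lemma: logbmo} remains valid with the hypothesis ``$f>\gamma>0$'' replaced by ``$f>0$ lower semicontinuous''.

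Once $-\log f \in \PBMO(\Omega\times(0,T))$ is established, the remainder of the argument is identical to the proof of the previous theorem: apply Corollary~\ref{cor: ekspint} to get $c\in\mathbb{R}$ and $\epsilon>0$ with $\int_{\Omega\times(0,T-\delta)} e^{\epsilon(-\log f - c)^{-}}\dmu < \infty$, then use $(-\log f - c)^{-} \geq (-\log f)^{-} - (c)^{-}$ to peel off the constant and conclude
\[
\int_{\Omega\times(0,T-\delta)\cap\{f>1\}} f^{\epsilon}\dmu < \infty,
\]
which together with the trivial bound $f^{\epsilon}\le 1$ on $\{f\le 1\}$ and finiteness of $|\Omega\times(0,T-\delta)|$ yields global integrability. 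The dependence of $\epsilon$ is precisely that inherited from Corollary~\ref{cor: ekspint} and Theorem~\ref{thm: john-nirenberg}, namely $p,n,\delta$, the data of $\Omega$, and $C_0,C_1$. I expect the compactness/uniformity bookkeeping in the first two paragraphs to be the entire substance; everything after is a direct citation of earlier results.
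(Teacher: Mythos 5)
Your proof is correct and follows essentially the same route as the paper: observe that lower semicontinuity plus strict positivity gives a rectangle-dependent lower bound $\gamma_R>0$ on each admissible $\sigma R$, then use that the constants in Lemma~\ref{lemma:KinnunenKuusi} are uniform in $\gamma$ to run the proof of Lemma~\ref{lemma: logbmo} unchanged, and finally cite the previous theorem's argument. If anything, you are slightly more careful than the paper about the compactness bookkeeping (taking the closure of a slightly shrunk rectangle so the positive minimum is actually attained), but the underlying idea is identical.
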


Another extension is to consider increasing limits of positive supersolutions. In this case it suffices to note that, in addition to finiteness, the integral in \eqref{eq: global integrability} has an upper bound uniform in the supersolutions $f$ except for the quantity $e^{c}$. Indeed, in addition to $e^{c}$ the only dependence is on $\norm{-\log f}_{\PBMO}$, which is determined by the structural constants $C_0$, $C_1$, $p$ and $n$. Thus the global integrability of increasing limits of positive supersolutions follows from the monotone convergence theorem provided that the functions in the sequence are uniformly bounded in $\Omega \times (T- \delta , T)$.

\nocite{AE1996}

\bibliographystyle{amsra}
\bibliography{viitteet}{}

\end{document}